\definecolor{darkblue}{rgb}{0,0,.7}
\newlist{alphenum}{enumerate}{1}
\setlist[alphenum]{fullwidth,label={(\alph*)}}
\theoremstyle{definition}
\newtheorem{theorem}{Theorem}[section]
\newtheorem{remark}[theorem]{Remark}
\newtheorem{lemma}[theorem]{Lemma}
\numberwithin{figure}{section}
\numberwithin{table}{section}
\numberwithin{equation}{section}
\newcommand{\R}{\mathbb{R}}
\newcommand{\Div}{\mathrm{div}\;}
\newcommand{\vecb}[1]{\mathbf{#1}}
\begin{document}
\date{\today}

\title{Pressure-robustness in the context of optimal control}

\author{C.~Merdon}
\address{Christian Merdon, Weierstrass Institute for Applied Analysis
  and Stochastics, Mohrenstr. 39, 10117 Berlin, Germany,
  }
\email{Christian.Merdon@wias-berlin.de}
\author{W.~Wollner}
\address{Winnifried Wollner, Technische Universit{\"a}t Darmstadt,
  Fachbereich Mathematik, Dolivostr. 15, 64293 Darmstadt, Germany,
   }
\email{wollner@mathematik.tu-darmstadt.de}

\maketitle

\begin{abstract}
  This paper studies the benefits of pressure-robust discretizations in the scope of optimal control of incompressible flows. Gradient forces that may appear in the data can have a negative impact on the accuracy of state and control and can only be correctly balanced if their $L^2$-orthogonality onto discretely divergence-free test functions is restored. Perfectly orthogonal divergence-free discretizations or divergence-free reconstructions of these test functions do the trick and lead to much better analytic a priori estimates that are also validated in numerical examples.
\end{abstract}

\keywords{Keywords: optimization of incompressible fluids, Stokes equations,
  pressure-robustness, a priori error estimates}
\subjclass{MSC2020: 49M41, 65N15, 76D07}

\section{Introduction}
The stationary Stokes equations seek an unknown velocity $\vecb{u}\in \vecb{V}$ and pressure $p \in Q$ such that
\begin{equation*}
  \begin{aligned}
    \nu (\nabla \vecb{u},\nabla \vecb{\varphi}) + (p,\Div
    \vecb{\varphi}) &=   (\vecb{f},\vecb{\varphi}) &
    \forall&\vecb{\varphi}\in \vecb{V}\\
    (\Div\vecb{u}, \psi) &= 0 & \forall&\psi \in Q
  \end{aligned}
\end{equation*}
with given data $\nu > 0$, $\vecb{f} \in \vecb{L}^2(\Omega)$ on a
domain $\Omega \subset \R^n$ with $n \le 3$, and the standard function
spaces
$\vecb{V} := \vecb{H}^1_0(\Omega;\R^d)$, and $Q := L^2_0(\Omega)$. A
standard discretization by inf-sup stable finite element spaces
$\vecb{V}_h \times Q_h \subset \vecb{V}\times Q$
yields solutions $(\vecb{u}_h,p_h)$ satisfying a best approximation
estimate, see, e.g.,~\cite[Section~III.1.2]{GiraultRaviart:1986}, or~\cite[Section~VI.2]{BrezziFortin:1991}, of the form
\[
  \| \vecb{u} -\vecb{u}_h \|^2_{\vecb{V}} \leq
  \frac{1}{\beta_h^2}\inf_{\vecb{\varphi}_h \in \vecb{V}_h}\|\vecb{u} -\vecb{\varphi}_h\|^2_V + C_P^2
  \quad \text{where} \quad
  C_P := \frac{1}{\nu} \inf_{\psi_h \in Q_h} \|p-\psi_h\|_Q
\]
with the discrete inf-sup constant $\beta_h$.
The estimate indicates, that the error in the velocity can be polluted
by a pressure which is hard to approximate. This is caused by a violation of the $L^2$-orthogonality between divergence-free functions and irrotational forces.
To remove this dependency, so called pressure-robust discretizations can be used, either perfectly orthogonal divergence-free methods \cite{SV:1983,Zhang:2011,FN:2013,GN:2014,Neilan:2015,JLMNR:2017} or modifications of classical methods via a reconstruction operator \(\Pi\) that repairs the orthogonality where needed \cite{Linke:2012,LMW:2015,LM:2016,LLMS:2017,JLMNR:2017,KVZ:2021}. With these methods estimates of the form
\[
  \| \vecb{u} -\vecb{u}_h \|^2_{\vecb{V}} \leq
  \frac{1}{\beta_h^2}\inf_{\vecb{\varphi}_h \in \vecb{V}_h}\|\vecb{u} -\vecb{\varphi}_h\|^2_V + C_\Pi^2
  \quad \text{where} \quad
  C_\Pi := \mathcal{O}(h^{k}) \|\Delta \vecb{u}\|_{H^{k-1}(\Omega)}
\]
are possible where the reconstruction causes a consistency error \(C_\Pi\) of optimal convergence order \(k\) (provided that \(\Delta \vecb{u} \in H^{k-1}\)) that is pressure-independent. Quasi-optimal estimates in case \(\Delta \vecb{u} \notin L^2(\Omega)\) can be found in \cite{LMN:2020}. Divergence-free $H^1$-conforming discretizations even come without any consistency error, i.e. \(C_\Pi= 0\), but usually require higher-order polynomial or non-standard ansatz spaces or specially refined meshes to ensure inf-sup stability. 
  Also note, that pressure-robust methods may have the potential to increase the accuracy beyond the presence of gradient errors in the data \(\vecb{f}\), but also in presence of complicated gradient forces generated by the material derivative in transient Navier-Stokes flows \cite{GLS:2019,ABBGLM:2021}.

This paper aims to investigate possible benefits of using pressure-robust discretizations in the context of
the optimization of incompressible flows, where a canonical
optimization problem is given by
\begin{equation}\label{eq:opt}\tag{P}
  \begin{aligned}
    \min_{(\vecb{q},\vecb{u},p) \in \vecb{Q} \times \vecb{V}\times Q}
    &\;\frac{1}{2}\|\vecb{u}-\vecb{u}^{\rm{d}}\|_{\vecb{L}^2(\Omega)}^2
    + \frac{\alpha}{2}\|\vecb{q}\|_{\vecb{L}^2(\Omega)}^2\\
    \text{s.t.}&\; \left\{
      \begin{aligned}
        \nu (\nabla \vecb{u},\nabla \vecb{\varphi}) + (p,\Div
        \vecb{\varphi}) &=   (\vecb{f}+\vecb{q},\vecb{\varphi}) &
        \forall&\vecb{\varphi}\in \vecb{V}\\
        (\Div\vecb{u}, \psi) &= 0 & \forall&\psi \in Q
      \end{aligned}\right.
    \end{aligned}
  \end{equation}
  with the control space $\vecb{Q} = \vecb{L}^2(\Omega;\R^n)$.
  
  The discretization of~\eqref{eq:opt} and similar control problems
  subject to an equation for an incompressible flow has been discussed
  extensively in the literature. 
  \cite{RoeschVexler:2006} showed optimal rates for a mixed finite
  element method for the above stationary Stokes control problem.
  Similar results on optimal convergence rates for the control
  of stationary and nonstationary Navier-Stokes control have been
  provided in~\cite{MR1079020} and~\cite{MR2050079}, respectively. 
  For least squares finite element approximations of the respective
  optimality system~\cite{MR2206446,MR2549788} showed best
  approximation results, and the same was done
  in~\cite{ChrysafinosKartzas:2015}
  for a standard Galerkin approximation of nonstationary Stokes control.
  For the related Dirichlet control problem error estimates have been
  obtained by HDG methods in~\cite{MR4169689} and for Navier-Stokes
  control in~\cite{MR1135991}

  All of the above results contain velocity errors depending on the
  pressure approximations. This implies that all the proposed methods
  will have spurious error contributions in the velocity induced by
  complicated pressures. In fact, irrotational forces can not only appear in the right-hand side
\(\vecb{f}\), but also in the data \(\vecb{u}^{\rm{d}}\). According to
its Helmholtz--Hodge decomposition \(\vecb{u}^{\rm{d}} = \nabla \psi +
\mathrm{curl} \phi\) only the divergence-free part \(\mathrm{curl}
\phi\) can be optimized, while the irrotational part \(\nabla \psi\)
cannot; but will confuse non-pressure-robust discretizations.
  Therefore, this paper discusses a
  pressure robust discretization of~\eqref{eq:opt}.
  A naive approach to pressure robustness in the control problem would
  be the use of a pressure robust discretization for the PDE
  constraint which we will detail in Section~\ref{sec:partialpressurerobust}.
  However, in view of irrotational forces in the data \(\vecb{u}^d\), this is insufficient and hence, Section~\ref{sec:fullpressurerobust}, provides a fully pressure robust approximation for~\eqref{eq:opt}.
  
\medskip
  The rest of the paper is structured as follows.
  Section~\ref{sec:canonical_opt_problem} introduces the canonical Stokes optimization problem studied in this paper.
  Section~\ref{sec:discretization_apriori} discusses
  the classical discretization and its a priori error estimate and suggests a novel pressure-robust discretization. Section~\ref{sec:analysis_fullyrobust} proves an a priori estimate for the fully pressure-robust variant that has qualitative improvements over the classical scheme. Section~\ref{sec:numerics} compares the three schemes
  in two numerical examples to illustrate the theoretical findings.

  \section{Canonical Stokes optimization problem}
  \label{sec:canonical_opt_problem}

Consider the optimal control problem: for given data \(\vecb{f} \in
\vecb{L}^2(\Omega)\), \(\vecb{u}^{\rm{d}} \in \vecb{L}^2(\Omega)\),
seek state and control \((\vecb{u},\vecb{q}) \in \vecb{V}^0 \times
\vecb{Q}\),
where $\vecb{V}^0 = \{ \vecb{\varphi} \in \vecb{V}\,|\, \Div
\vecb{\varphi} =0\} $ are the divergence-free functions, solving
\begin{align*}
  \min_{\substack{(\vecb{u},\vecb{q}) \in \vecb{V}^0 \times \vecb{Q}}}\;& \frac{1}{2}\|
\vecb{u} - \vecb{u}^{\rm{d}} \|^2_{\vecb{L}^2(\Omega)} +
\frac{\alpha}{2} \| \vecb{q} \|^2_{\vecb{L}^2(\Omega)}\\
&\text{s.t. } (\nu \nabla \vecb{u},\nabla \vecb{\varphi}) =
(\vecb{f}+\vecb{q},\vecb{\varphi})&\forall&\vecb{\varphi}\in \vecb{V}^0.
\end{align*}
Note, that this problem is equivalent to~\eqref{eq:opt} by
introducing a pressure $p\in Q$ to allow working with test functions
from $\vecb{V}$ rather than $\vecb{V}^0$.

Since this is a linear quadratic optimization problem, standard
theory, e.g.,~\cite{Troeltzsch:2010},
gives the necessary and sufficient optimality conditions with an
adjoint state $\vecb{w} \in \vecb{V}^0$ satisfying:
\begin{align*}
  0 & = \nu ( \nabla \vecb{u}, \nabla \vecb{\varphi}) -
  (\vecb{q} + \vecb{f}, \vecb{\varphi}) & \forall \vecb{\varphi}&\in\vecb{V}^0,\\
  0 & =   \nu (\nabla \vecb{\varphi},\nabla \vecb{w}) - (\vecb{u} - \vecb{u}^{\rm{d}}, \vecb{\varphi}) & \forall \vecb{\varphi}&\in\vecb{V}^0,\\
  0 & =  (\alpha \vecb{q} + \vecb{w}, \vecb{\varphi}) & \forall \vecb{\varphi}&\in\vecb{Q}.
\end{align*}
The third equation yields an algebraic relation 
\begin{align*}
  \vecb{q} = \frac{-1}{\alpha}\vecb{w} 
\end{align*}
between the adjoint $\vecb{w}$ and the control which could be used to
eliminate the control variable from the problem, by the so called
variational discretization approach~\cite{Hinze:2005}. 
However,~\cite{GaspozKreuzerVeeserWollner:2020} suggests that rather
than taking this simple substitution a more convenient choice for the
following analysis is the
consideration of the rescaled adjoint
\[
\vecb{z} = \frac{1}{\sqrt{\alpha}}\vecb{w}, \qquad \text{and hence}
\qquad \vecb{q} = \frac{-1}{\sqrt{\alpha}}\vecb{z}  . 
\]
From this it is easy to see that an optimal solution
$(\vecb{q},\vecb{u}) \in \vecb{Q}\times \vecb{V}^0$ of~\eqref{eq:opt} is equivalently given by a
solution $(\vecb{u},\vecb{z}) \in \vecb{V}^0 \times \vecb{V}^0$ of
\begin{equation}\label{eq:KKT}
  \begin{aligned}
    \nu ( \nabla \vecb{u}, \nabla \vecb{\varphi}) +
    \alpha^{-1/2} (\vecb{z},\vecb{\varphi})  &= (\vecb{f},
    \vecb{\varphi}) & \forall \vecb{\varphi}&\in\vecb{V}^0,\\
    \nu ( \nabla \vecb{\varphi}, \nabla \vecb{z}) -
    \alpha^{-1/2}(\vecb{u}, \vecb{\varphi}) &= -\alpha^{-1/2}(\vecb{u}^{\rm{d}}, \vecb{\varphi}) & \forall \vecb{\varphi}&\in\vecb{V}^0.\\
  \end{aligned}
\end{equation}
Adding the pressures or Lagrange multipliers for the divergence constraints, it is also equivalent to seek
$(\vecb{u},\vecb{z},p,\lambda) \in \vecb{V} \times \vecb{V} \times Q \times Q$
\begin{equation}\label{eq:KKTfull}
  \begin{aligned}
    \nu ( \nabla \vecb{u}, \nabla \vecb{\varphi})
    + (\Div\vecb{\varphi}, p) +
    \alpha^{-1/2} (\vecb{z},\vecb{\varphi})  &= (\vecb{f},
    \vecb{\varphi}) & \forall \vecb{\varphi}&\in\vecb{V},\\
    (\Div\vecb{u}, \psi)  &= 0 & \forall \psi &\in Q,\\
    \nu ( \nabla \vecb{\varphi}, \nabla \vecb{z})
    + (\Div\vecb{\varphi}, \lambda)  -
    \alpha^{-1/2}(\vecb{u}, \vecb{\varphi}) &= -\alpha^{-1/2}(\vecb{u}^{\rm{d}}, \vecb{\varphi}) & \forall \vecb{\varphi}&\in\vecb{V},\\
    (\Div\vecb{z}, \psi)  &= 0 & \forall \psi &\in Q.\\
  \end{aligned}
\end{equation}

\section{Discretization and a priori estimates}
  \label{sec:discretization_apriori}

\subsection{Preliminaries}

For the discretization the space \(\vecb{V}^0\) is replaced by some discretely divergence-free space
\begin{align*}
  \vecb{V}^0_h := \lbrace \vecb{\varphi}_h \in \vecb{V}_h : (\Div \vecb{\varphi}_h , \psi_h) = 0 \quad \text{for all } \psi_h \in Q_h\rbrace
\end{align*}
for some inf-sup stable pair \(\vecb{V}_h \times Q_h\).
The analysis involves the consistency error of the possibly relaxed divergence constraint in form of the dual norm
\begin{align}\label{eqn:dualnorm}
  \| \nabla \psi \|^2_{(\vecb{V}^0_h)^\star} := \sup_{\vecb{\varphi}_h \in \vecb{V}^0_h}
  \int \psi \, \Div(\vecb{\varphi}_h) \, dx / \| \nabla \vecb{\varphi}_h \|.
\end{align}
For exactly divergence-free schemes this norm is always zero, and otherwise can be bounded by the pressure best-approximation error \(\|\psi - \pi_{Q_h} \psi \|\).

Below we discuss a straight-forward classical discretization and some modified pressure-robust variant that replaces these errors by something qualitatively better.

It is studied how the consistency errors from the lack of pressure-robustness or
their pressure-robust alternatives influence the a priori estimates
for the natural energy norm induced by the PDE rather than the cost
functional, see
also~\cite{GaspozKreuzerVeeserWollner:2020},
\begin{align*}
|||(\vecb{u}, \vecb{z})|||^2 :=
   \| \nabla\vecb{u} \|^2_{L^2(\Omega)}
    + \| \nabla \vecb{z} \|^2_{L^2(\Omega)}.
\end{align*}
To do so, we estimate the distance of \((\vecb{u}_h, \vecb{z}_h)\) to the
Stokes best-approximations \((\mathbb{S}_h\vecb{u},
\mathbb{S}_h\vecb{z})\in \vecb{V}^0_h\times \vecb{V}^0_h\) ,
that are defined by
\begin{align}\label{eqn:stokes_projector}
  (\nabla(\vecb{w} - \mathbb{S}_h \vecb{w}), \nabla \vecb{\varphi}_h) = 0
  \quad \text{for all } \vecb{\varphi}_h \in \vecb{V}_h^0,
\end{align}
and allow for the Pythagoras theorem
\begin{equation}\label{eq:basicerror}
|||(\vecb{u} - \vecb{u}_h, \vecb{z} - \vecb{z}_h)|||^2
= |||(\vecb{u} - \mathbb{S}_h\vecb{u}, \vecb{z} - \mathbb{S}_h\vecb{z})|||^2
+ |||(\mathbb{S}_h\vecb{u} - \vecb{u}_h, \mathbb{S}_h\vecb{z} - \vecb{z}_h)|||^2.
\end{equation}
Due to inf-sup stability with inf-sup constant \(\beta_h > 0\), the first summand enjoys the
best-approximation property, i.e. for any \(\vecb{w} \in \vecb{V}\),
\begin{align*}
  \| \nabla( \vecb{w} - \mathbb{S}_h \vecb{w}) \|^2_{L^2(\Omega)}
  = \inf_{\vecb{\varphi}_h \in \vecb{V}_h^0} \| \nabla( \vecb{w} - \vecb{\varphi}_h) \|^2_{L^2(\Omega)} 
  \leq \frac{1}{\beta_h} \inf_{\vecb{\varphi}_h \in \vecb{V}_h} \| \nabla( \vecb{w} - \vecb{\varphi}_h) \|^2_{L^2(\Omega)}
\end{align*}
and shows convergence rates corresponding to the regularity of \(\vecb{u}\) and \(\vecb{z}\)
and the polynomial order of \(\vecb{V}_h\).
This best-approximation result is
only perturbed by the second summand $|||(\mathbb{S}_h\vecb{u} - \vecb{u}_h, \mathbb{S}_h\vecb{z} - \vecb{z}_h)|||^2$ which therefore is
the primal object of interest in the a priori error analysis below.

Moreover, assuming sufficient regularity of \(\vecb{w} \in \lbrace
\vecb{u}, \vecb{z}\rbrace\) and \(H^2\)-regularity of the
Stokes-problem on the domain $\Omega$,
the Stokes projection also enjoys the estimate
\begin{align}\label{eqn:l2estimate_stokesprojector}
\| \vecb{w} - \mathbb{S}_h \vecb{w} \|_{L^2(\Omega)}
\lesssim h \| \nabla (\vecb{w} - \mathbb{S}_h \vecb{w}) \|_{L^2(\Omega)}
\lesssim h^{k+1} \| \vecb{w} \|_{H^{k+1}}
\end{align}
which is needed in Theorem~\ref{lem:fullyrobust} below and
can be shown by the usual Aubin--Nitsche argument.

\subsection{Classical discretization}
The classical variational discretization of~\eqref{eq:KKTfull} solves the following discrete problem: seek \((\vecb{u}_h, \vecb{z}_h, p_h , \lambda_h) \in \vecb{V}_h \times \vecb{V}_h \times Q_h \times Q_h\) such that 
\begin{equation}\label{eqn:classical_discretisation}
\begin{aligned}
  \nu (\nabla \vecb{u}_h, \nabla \vecb{\varphi}_h) + (\Div
  \vecb{\varphi}_h,p_h)& = (\vecb{f} - \alpha^{-1/2}\vecb{z}_h, \vecb{\varphi}_h) & \forall&
  \vecb{\varphi}_h\in \vecb{V}_h,\\
  (\Div \vecb{u}_h , \psi_h) & = 0 &\forall&\psi_h \in Q_h,\\
  \nu (\nabla \vecb{\varphi}_h, \nabla \vecb{z}_h) + (\Div \vecb{\varphi}_h,\lambda_h) & = \alpha^{-1/2} (\vecb{u}_h-\vecb{u}^{\rm{d}}, \vecb{\varphi}_h)&\forall&
  \vecb{\varphi}_h\in \vecb{V}_h,\\
  (\Div \vecb{z}_h , \psi_h) & = 0&\forall&\psi_h \in Q_h.
 \end{aligned}
\end{equation}

The error estimates involve the previously defined Stokes projector
\(\mathbb{S}_h : \vecb{V} \rightarrow \vecb{V}^0_h\) and as stated above only the second summand in~\eqref{eq:basicerror} needs to be discussed.
\begin{lemma}[A priori error estimate for difference to
  best-approximation]
  \label{lem:classical}
  For the solution of \((\vecb{u}_h, \vecb{q}_h)\) of \eqref{eqn:classical_discretisation} and the discrete Stokes projectors of the exact solutions \((\mathbb{S}_h\vecb{u}, \mathbb{S}_h\vecb{q})\), it holds
  \begin{align*}
  |||(\mathbb{S}_h\vecb{u} - \vecb{u}_h, \mathbb{S}_h\vecb{z} - \vecb{z}_h)|||
    \leq&\; \frac{1}{\nu} \Bigl(\| \nabla p \|^2_{(\vecb{V}^0_h)^\star} + \| \nabla \lambda \|^2_{(\vecb{V}^0_h)^\star}\Bigr)^{1/2}\\ &+ \frac{1}{\alpha^{1/2}\nu} \Bigl(\| \vecb{u} - \mathbb{S}_h \vecb{u} \|^2 + \|\vecb{z} - \mathbb{S}_h \vecb{z}\|^2 \Bigr)^{1/2}.
  \end{align*}
  where \(p\) and \(\lambda\) are the respective pressures of the Stokes problems for \(\vecb{u}\) and \(\vecb{z}\). The dual norms, as defined in \eqref{eqn:dualnorm}, measure their generated consistency error due to the discrete divergence.
\end{lemma}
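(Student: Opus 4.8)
The plan is to run a coercivity/energy argument on the difference to the Stokes best-approximation, exploiting the skew-symmetric coupling of the primal and adjoint equations that the rescaling $\vecb{z} = \alpha^{-1/2}\vecb{w}$ was chosen to produce. I abbreviate $\vecb{e}_{\vecb{u}} := \mathbb{S}_h\vecb{u} - \vecb{u}_h$ and $\vecb{e}_{\vecb{z}} := \mathbb{S}_h\vecb{z} - \vecb{z}_h$, both of which lie in $\vecb{V}_h^0$, so that the object to be bounded is exactly $|||(\vecb{e}_{\vecb{u}}, \vecb{e}_{\vecb{z}})|||$.

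First I would restrict the discrete system \eqref{eqn:classical_discretisation} to test functions $\vecb{\varphi}_h \in \vecb{V}_h^0$, which annihilates the discrete pressure terms $(\Div\vecb{\varphi}_h,p_h)$ and $(\Div\vecb{\varphi}_h,\lambda_h)$ and recovers the reduced pair \eqref{eq:KKT} in discrete form. The continuous counterpart is obtained by testing the full system \eqref{eq:KKTfull} with the same $\vecb{\varphi}_h \in \vecb{V}_h^0 \subset \vecb{V}$; the key point is that, $\vecb{\varphi}_h$ being only \emph{discretely} divergence-free, the continuous pressure terms $(\Div\vecb{\varphi}_h,p)$ and $(\Div\vecb{\varphi}_h,\lambda)$ do \emph{not} vanish and form the consistency error. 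Using the defining Galerkin orthogonality of the Stokes projector \eqref{eqn:stokes_projector} to replace $\nabla\vecb{u}$ by $\nabla\mathbb{S}_h\vecb{u}$ and $\nabla\vecb{z}$ by $\nabla\mathbb{S}_h\vecb{z}$, subtracting the discrete from the continuous identities, and splitting $\vecb{u}-\vecb{u}_h = (\vecb{u}-\mathbb{S}_h\vecb{u}) + \vecb{e}_{\vecb{u}}$ (and likewise for $\vecb{z}$), produces for every $\vecb{\varphi}_h \in \vecb{V}_h^0$ the two error equations
\begin{align*}
\nu(\nabla\vecb{e}_{\vecb{u}}, \nabla\vecb{\varphi}_h) + \alpha^{-1/2}(\vecb{e}_{\vecb{z}}, \vecb{\varphi}_h) &= -(\Div\vecb{\varphi}_h, p) - \alpha^{-1/2}(\vecb{z} - \mathbb{S}_h\vecb{z}, \vecb{\varphi}_h),\\
\nu(\nabla\vecb{\varphi}_h, \nabla\vecb{e}_{\vecb{z}}) - \alpha^{-1/2}(\vecb{e}_{\vecb{u}}, \vecb{\varphi}_h) &= -(\Div\vecb{\varphi}_h, \lambda) + \alpha^{-1/2}(\vecb{u} - \mathbb{S}_h\vecb{u}, \vecb{\varphi}_h).
\end{align*}

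The decisive step is to test the first equation with $\vecb{\varphi}_h = \vecb{e}_{\vecb{u}}$ and the second with $\vecb{\varphi}_h = \vecb{e}_{\vecb{z}}$ and add: the two zeroth-order coupling contributions $\alpha^{-1/2}(\vecb{e}_{\vecb{z}}, \vecb{e}_{\vecb{u}})$ and $-\alpha^{-1/2}(\vecb{e}_{\vecb{u}}, \vecb{e}_{\vecb{z}})$ cancel identically, leaving precisely $\nu\,|||(\vecb{e}_{\vecb{u}}, \vecb{e}_{\vecb{z}})|||^2$ on the left. On the right, the pressure contributions are estimated directly from \eqref{eqn:dualnorm} by $|(\Div\vecb{e}_{\vecb{u}}, p)| \le \| \nabla p \|_{(\vecb{V}_h^0)^\star}\|\nabla\vecb{e}_{\vecb{u}}\|$ and analogously for $\lambda$ (the dual norm is finite because $\vecb{e}_{\vecb{u}}, \vecb{e}_{\vecb{z}} \in \vecb{V}_h^0$, and is controlled by the pressure best-approximation error as noted after \eqref{eqn:dualnorm}); a discrete Cauchy--Schwarz packages them into $(\| \nabla p \|^2_{(\vecb{V}^0_h)^\star} + \| \nabla \lambda \|^2_{(\vecb{V}^0_h)^\star})^{1/2}\,|||(\vecb{e}_{\vecb{u}}, \vecb{e}_{\vecb{z}})|||$. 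The two remaining $L^2$ terms are handled by Cauchy--Schwarz and a second discrete Cauchy--Schwarz to give $\alpha^{-1/2}(\| \vecb{u} - \mathbb{S}_h\vecb{u}\|^2 + \|\vecb{z} - \mathbb{S}_h\vecb{z}\|^2)^{1/2}(\|\vecb{e}_{\vecb{u}}\|^2 + \|\vecb{e}_{\vecb{z}}\|^2)^{1/2}$. Dividing through by $\nu\,|||(\vecb{e}_{\vecb{u}}, \vecb{e}_{\vecb{z}})|||$ then yields the claimed two-term bound.

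The one point that needs care, and the only place a constant is tacitly absorbed, is passing from the $L^2$-norms $\|\vecb{e}_{\vecb{u}}\|, \|\vecb{e}_{\vecb{z}}\|$ back to the energy norm $|||(\vecb{e}_{\vecb{u}}, \vecb{e}_{\vecb{z}})|||$ occurring on the left: this invokes the Poincar\'e--Friedrichs inequality on $\vecb{V}_h^0 \subset \vecb{H}^1_0(\Omega)$, whose constant is normalized to one (equivalently, scaled into the domain) to match the constant-free form of the statement. Beyond this normalization the argument is elementary; the genuinely structural ingredient is the exact cancellation of the cross terms, which is exactly what makes the symmetric $\alpha^{-1/2}$-scaling preferable to the naive elimination of the control.
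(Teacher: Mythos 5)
Your proof is correct and follows essentially the same route as the paper: an energy argument testing the primal error equation with $\mathbb{S}_h\vecb{u}-\vecb{u}_h$ and the adjoint one with $\mathbb{S}_h\vecb{z}-\vecb{z}_h$, exploiting the exact cancellation of the skew-symmetric $\alpha^{-1/2}$-coupling terms (your splitting of $\vecb{z}-\vecb{z}_h$ and $\vecb{u}-\vecb{u}_h$ before testing, versus the paper's splitting after, is only an organizational difference), followed by the same dual-norm and Cauchy--Schwarz estimates. Your explicit remark that passing from $\|\mathbb{S}_h\vecb{u}-\vecb{u}_h\|_{L^2}$ to the energy norm tacitly normalizes the Friedrichs constant to one is a point the paper's own proof passes over in silence, so if anything you are slightly more careful than the original.
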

\begin{remark}
  Note that the upper bound
  \[
    \| \nabla \psi \|_{(\vecb{V}^0_h)^\star} \leq \| \psi - \pi_{Q_h} \psi \|_{L^2(\Omega)}
  \]
  together with assumed regularity of $p$ and $\lambda$
  would allow for the usual estimates in terms of powers of the mesh
  size.
  However, the upper bound \(\| \nabla \psi \|_{(\vecb{V}^0_h)^\star}\) is sharper, and in particular vanishes for
  divergence-free elements (i.e., $\vecb{V}_h^0 \subset
  \vecb{V}^0$). In this case only higher order terms according to \eqref{eqn:l2estimate_stokesprojector} remain on the right-hand side.
\end{remark}
\begin{proof}[Proof of Lemma~\ref{lem:classical}]
Using \eqref{eqn:stokes_projector} and
testing the first equation of \eqref{eq:KKTfull} and \eqref{eqn:classical_discretisation} with
\(\vecb{\varphi}_h = \mathbb{S}_h\vecb{u} - \vecb{u}_h\in \vecb{V}^0_h\) reveals
\begin{multline*}
\nu \| \nabla( \mathbb{S}_h\vecb{u} - \vecb{u}_h) \|^2_{L^2(\Omega)}\\
\begin{aligned}
& = \nu (\nabla(\vecb{u} - \vecb{u}_h), \nabla(\mathbb{S}_h\vecb{u} - \vecb{u}_h))\\
& = (\vecb{f} - \alpha^{-1/2}\vecb{z}, \mathbb{S}_h\vecb{u} - \vecb{u}_h) - (p,\Div(\mathbb{S}_h\vecb{u} - \vecb{u}_h))
- (\vecb{f} - \alpha^{-1/2}\vecb{z}_h, \mathbb{S}_h\vecb{u} - \vecb{u}_h)\\
& = -\alpha^{-1/2}(\vecb{z}-\vecb{z}_h, \mathbb{S}_h\vecb{u} - \vecb{u}_h) - (p,\Div(\mathbb{S}_h\vecb{u} - \vecb{u}_h)).
\end{aligned}
\end{multline*}
Analogously, one shows that
\begin{align*}
\nu \| \nabla( \mathbb{S}_h\vecb{z} - \vecb{z}_h) \|^2_{L^2(\Omega)}
& = \alpha^{-1/2} (\vecb{u}_h-\vecb{u}, \mathbb{S}_h\vecb{z} - \vecb{z}_h) - (\lambda,\Div(\mathbb{S}_h\vecb{z} - \vecb{z}_h)).
\end{align*}
Using
\begin{align*}
 (\vecb{z}-\vecb{z}_h, \mathbb{S}_h\vecb{u} - \vecb{u}_h)
 & = (\mathbb{S}_h\vecb{z} - \vecb{z}_h, \mathbb{S}_h\vecb{u} - \vecb{u}_h)
 + (\vecb{z} - \mathbb{S}_h \vecb{z}, \mathbb{S}_h\vecb{u} - \vecb{u}_h)\\
 (\vecb{u}-\vecb{u}_h, \mathbb{S}_h\vecb{z} - \vecb{z}_h)
 & = (\mathbb{S}_h\vecb{u} - \vecb{u}_h, \mathbb{S}_h\vecb{z} - \vecb{z}_h)
 + (\vecb{u} - \mathbb{S}_h \vecb{u}, \mathbb{S}_h\vecb{z} - \vecb{z}_h)
 \end{align*}
 one obtains
 \begin{align*}
 A :&= (\vecb{z}-\vecb{z}_h, \mathbb{S}_h\vecb{u} - \vecb{u}_h)-(\vecb{u}_h-\vecb{u}, \mathbb{S}_h\vecb{z} - \vecb{z}_h)\\
 & = (\vecb{z} - \mathbb{S}_h \vecb{z}, \mathbb{S}_h\vecb{u} - \vecb{u}_h)-(\vecb{u} - \mathbb{S}_h \vecb{u}, \mathbb{S}_h\vecb{z} - \vecb{z}_h)
\end{align*}
which can be estimated by
\begin{align*}
 |A| \leq
\left(\| \vecb{u} - \mathbb{S}_h \vecb{u} \|^2 + \|\vecb{z} - \mathbb{S}_h \vecb{z}\|^2\right)^{1/2} \times 
|||( \mathbb{S}_h\vecb{u} - \vecb{u}_h,  \mathbb{S}_h\vecb{z} - \vecb{z}_h) |||
\end{align*}
Analogously, for 
\begin{equation*}
  B := (p,\Div(\mathbb{S}_h\vecb{u} - \vecb{u}_h))  +  (\lambda,\Div(\mathbb{S}_h\vecb{z} - \vecb{z}_h)
\end{equation*}
one obtains the estimate
\begin{align*}
|B|  & \leq \Bigl(\| \nabla p \|^2_{(\vecb{V}^0_h)^\star} + \| \nabla \lambda \|^2_{(\vecb{V}^0_h)^\star}\Bigr)^{1/2} \times |||( \mathbb{S}_h\vecb{u} - \vecb{u}_h,  \mathbb{S}_h\vecb{z} - \vecb{z}_h) |||
\end{align*}
The summation of both estimates yields
\begin{align*}
  ||| ( \mathbb{S}_h\vecb{u} - \vecb{u}_h,\mathbb{S}_h\vecb{z} - \vecb{z}_h) |||^2 
   &=\; \frac{-1}{\nu} \Bigl(B + \alpha^{-1/2} A
  \Bigr)\\
  &\leq \nu^{-1} \left(\Bigl(\| \nabla p \|^2_{(\vecb{V}^0_h)^\star}
    + \| \nabla \lambda \|^2_{(\vecb{V}^0_h)^\star}\Bigr)^{1/2}
  \right.\\
  &\qquad\qquad+\left.    \alpha^{-1/2} \Bigl(\| \vecb{u} - \mathbb{S}_h \vecb{u} \|^2 +
    \|\vecb{z} - \mathbb{S}_h \vecb{z}\|^2 \Bigr)^{1/2} \right)\\
  &\quad\times |||( \mathbb{S}_h\vecb{u} - \vecb{u}_h,  \mathbb{S}_h\vecb{z} - \vecb{z}_h) |||.
\end{align*}
This concludes the proof.
\end{proof}

\subsection{Pressure-robust discretization}
In this section, we assume the existence of some reconstruction
operator \(\Pi : \vecb{V}_h+\vecb{V} \rightarrow \vecb{W}_h\) that maps
discretely divergence-free functions to exactly divergence-free
function, i.e., it holds
\[
  \Pi\colon \vecb{V}_h^0 \rightarrow \vecb{W}_h \cap \lbrace \vecb{\varphi} \in H(\mathrm{div},\Omega) : \Div\vecb{\varphi} = 0\rbrace.
\]
For the Bernardi--Raugel finite element methods used in the numerical examples one can use the standard interpolation \(\Pi = I_{\text{BDM}_1}\) into the Brezzi-Douglas-Marini space \(\vecb{W}_h := \mathrm{BDM}_1(\mathcal{T}) := \vecb{P}_1(\mathcal{T}) \cap H(\mathrm{div}, \Omega)\) where \(\vecb{P}_1\) denotes the piecewise affine vector-valued polynomials. This operator than has the property
\begin{align}\label{eqn:estimate_reconst}
  \| \vecb{\varphi} - \Pi \vecb{\varphi} \|_{L^2(\Omega)} \lesssim h^{m} \| \vecb{\varphi} \|_{H^{1+m}} \quad \text{for } m \in \lbrace 1,2 \rbrace
  \text{ and } \vecb{\varphi} \in H^m(\Omega)
\end{align}
which can be found in textbooks like \cite{BrezziFortin:1991}.
The Friedrichs inequality 
\(\| \vecb{\varphi} \|_{L^2(\Omega)} \leq C_F \| \nabla \vecb{\varphi}
\|_{L^2(\Omega)}\) then also implies the estimate
\begin{align}\label{eqn:stability_reconst}
  \| \Pi \vecb{\varphi} \|_{L^2(\Omega)}
  \leq \| \vecb{\varphi} - \Pi \vecb{\varphi} \|_{L^2(\Omega)} + \| \vecb{\varphi} \|_{L^2(\Omega}
  \lesssim (h + C_F) \| \nabla \vecb{\varphi} \|_{L^2(\Omega)}
   \quad \text{for any } \vecb{\varphi} \in \vecb{V}.
\end{align}
For higher-order finite elements ($k \geq 2$) the same property for \(m \in \lbrace 1 ,\ldots, k+1 \rbrace\) and additional orthogonality properties are needed such that, for any \(\vecb{g} \in H^{k-1}(\Omega)\), it holds
\begin{align*}
  (\vecb{g}, \vecb{\varphi} - \Pi \vecb{\varphi}) \lesssim h^{k} \| \vecb{g} \|_{H^{k-1}} \| \nabla \vecb{\varphi} \|
\end{align*}
to allow for an estimation of the consistency error by
\begin{align}\label{eqn:consistency_reconst}
\| \vecb{g} \circ (1 - \Pi) \|^2_{(\vecb{V}^0_h)^\star} & := \sup_{\vecb{\varphi}_h \in \vecb{V}^0_h} \frac{(\vecb{g},(1 - \Pi) \vecb{\varphi}_h)}{\|\nabla \vecb{\varphi}_h \|_{L^2(\Omega)}} \lesssim h^k \| \vecb{g} \|_{H^{k-1}}.
\end{align}
For more details and choices of reconstruction operators for higher order finite element methods see, e.g.,~\cite{LMT:2016,LM:2016,LLMS:2017}.

\subsubsection{Partially pressure-robust discretization}\label{sec:partialpressurerobust}
In the optimal control setting, a naive approach to pressure robustness would be the use of a pressure
robust discretization of the Stokes equation in~\eqref{eq:opt}, giving
the problem
\begin{align*}
  \min_{(\vecb{q}_h,\vecb{u}_h,p_h) \in \vecb{Q} \times \vecb{V}_h\times Q_h}
    &\;\frac{1}{2}\|\vecb{u}_h-\vecb{u}^{\rm{d}}\|_{\vecb{L}^2(\Omega)}^2
    + \frac{\alpha}{2}\|\vecb{q}_h\|_{\vecb{L}^2(\Omega)}^2\\
    \text{s.t.}&\; \left\{
      \begin{aligned}
        \nu (\nabla \vecb{u}_h,\nabla \vecb{\varphi}_h) + (p_h,\Div
        \vecb{\varphi}_h) &=   (\vecb{f}+ \vecb{q}_h,\Pi\vecb{\varphi}_h) &
        \forall&\vecb{\varphi}_h\in \vecb{V},\\
        (\Div\vecb{u}_h, \psi_h) &= 0 & \forall&\psi_h \in Q_h.
      \end{aligned}\right.
  \end{align*}
Here following the variational discretization approach
of~\cite{Hinze:2005} the control $\vecb{q}_h \in \vecb{Q}$ is only
discretized implicitly by the optimality conditions. In the case at
hand the optimality conditions yield $\vecb{q}_h = -\alpha^{-1/2}\Pi \vecb{z}_h  = -\alpha^{-1/2}\Pi \vecb{w}_h \in
\vecb{W}_h$.

Following the same arguments as in Section~\ref{sec:canonical_opt_problem} the
solution of this discretized optimization problem is given
equivalently by a solution 
\((\vecb{u}_h, \vecb{z}_h, p_h , \lambda_h) \in \vecb{V}_h \times \vecb{V}_h \times Q_h \times Q_h\) of
\begin{align*}
  \nu (\nabla \vecb{u}_h, \nabla \vecb{\varphi}_h) + (\Div
  \vecb{\varphi}_h,p_h)& = (\vecb{f} - \alpha^{-1/2} \Pi\vecb{z}_h, \Pi
  \vecb{\varphi}_h) & \forall &\vecb{\varphi}_h\in \vecb{V}_h\\
  (\Div \vecb{u}_h , \psi_h) & = 0& \forall&\psi_h \in Q_h\\
  \nu (\nabla \vecb{\varphi}_h, \nabla \vecb{z}_h) + (\Div \vecb{\varphi}_h,\lambda_h) & = \alpha^{-1/2} (\vecb{u}_h-\vecb{u}^{\rm{d}}, \vecb{\varphi}_h)& \forall &\vecb{\varphi}_h\in \vecb{V}_h\\
  (\Div \vecb{z}_h , \psi_h) & = 0& \forall&\psi_h \in Q_h.
\end{align*}
The optimality system shows that we can not expect a real advantage
of this partially pressure robust discretization compared to 
the classical formulation, since the adjoint still suffers from a lack
of pressure robustness and associated consistency errors for hidden
gradient fields in the data. That this is indeed the case is shown in
the numerical examples in Section~\ref{sec:numerics}.

\subsubsection{Fully pressure-robust
  discretization}\label{sec:fullpressurerobust}\ \\
To obtain a fully pressure-robust method, in addition to the Stokes equation
also the cost functional needs to be modified as follows
\begin{align*}
  \min_{(\vecb{q}_h,\vecb{u}_h,p_h) \in \vecb{Q} \times \vecb{V}_h\times Q_h}
    &\;\frac{1}{2}\|\Pi \vecb{u}_h-\vecb{u}^{\rm{d}}\|_{\vecb{L}^2(\Omega)}^2
    + \frac{\alpha}{2}\|\vecb{q}_h\|_{\vecb{L}^2(\Omega)}^2\\
    \text{s.t.}&\; \left\{
      \begin{aligned}
        \nu (\nabla \vecb{u}_h,\nabla \vecb{\varphi}_h) + (p_h,\Div
        \vecb{\varphi}_h) &=   (\vecb{f}+\vecb{q}_h,\Pi\vecb{\varphi}_h) &
        \forall&\vecb{\varphi}_h\in \vecb{V}_h,\\
        (\Div\vecb{u}_h, \psi_h) &= 0 & \forall&\psi_h \in Q_h.
      \end{aligned}\right.
  \end{align*}
Again, the optimization problem is equivalent to searching for a
solution of the reduced optimality system. Hence, we search
\((\vecb{u}_h, \vecb{z}_h, p_h , \lambda_h) \in \vecb{V}_h \times \vecb{V}_h \times Q_h \times Q_h\) solving
\begin{equation}\label{eqn:probust_discretisation}
\begin{aligned}
  \nu (\nabla \vecb{u}_h, \nabla \vecb{\varphi}_h) + (\Div \vecb{\varphi}_h,p_h)& = (\vecb{f} -\alpha^{-1/2}\Pi \vecb{z}_h, \Pi \vecb{\varphi}_h)& \forall &\vecb{\varphi}_h\in \vecb{V}_h,\\
  (\Div \vecb{u}_h , \psi_h) & = 0& \forall&\psi_h \in Q_h,\\
  \nu (\nabla \vecb{\varphi}_h, \nabla \vecb{z}_h) + (\Div
  \vecb{\varphi}_h,\lambda_h) & = \alpha^{-1/2} (\Pi \vecb{u}_h - \vecb{u}^{\rm{d}}, \Pi \vecb{\varphi}_h)& \forall &\vecb{\varphi}_h\in \vecb{V}_h,\\
  (\Div \vecb{z}_h , \psi_h) & = 0& \forall&\psi_h \in Q_h.
\end{aligned}
\end{equation}

\section{Analysis of the fully pressure-robust method}
\label{sec:analysis_fullyrobust}

\begin{lemma}[A priori error estimate]\label{lem:fullyrobust}
  For the solution \((\vecb{u}_h, \vecb{z}_h)\) of
  \eqref{eqn:probust_discretisation} and the discrete Stokes
  projectors \((\mathbb{S}_h\vecb{u}, \mathbb{S}_h\vecb{z})\) of the
  (assumed to be sufficiently smooth) exact solutions, it holds
  \begin{align*}
    |||(\mathbb{S}_h\vecb{u} - \vecb{u}_h, \mathbb{S}_h\vecb{z} - \vecb{z}_h)|||
    \leq&\; \left(\| \Delta \vecb{u} \circ (1 - \Pi) \|^2_{{\vecb{V}^0_h}^\star} + \| \Delta \vecb{z} \circ (1 - \Pi) \|^2_{{\vecb{V}^0_h}^\star}\right)^{1/2}\\
    &+ \frac{1}{\nu\alpha^{1/2}} \left(\| (1 - \Pi \mathbb{S}_h)
      \vecb{u} \circ \Pi \|^2_{(\vecb{V}^0_h)^\star}\right. \\
    &\qquad\qquad\quad+ \left.\| (1 - \Pi \mathbb{S}_h) \vecb{z} \circ \Pi \|^2_{(\vecb{V}^0_h)^\star} \right)^{1/2}.
  \end{align*}
  The consistency error caused by the reconstruction operators can be
  estimated under the assumption of the \(H^2\)-regularity of the
  Stokes operator on the given domain and~\eqref{eqn:estimate_reconst},~\eqref{eqn:consistency_reconst} for the
  reconstruction operator, by
  \begin{align*}
    \| \Delta \vecb{w} \circ (1 - \Pi) \|^2_{(\vecb{V}^0_h)^\star} & := \sup_{\vecb{\varphi}_h \in \vecb{V}^0_h} \frac{(\Delta \vecb{w},(1 - \Pi) \vecb{\varphi}_h)}{\|\nabla \vecb{\varphi}_h \|_{L^2(\Omega)}} \lesssim h^k \| \Delta \vecb{w} \|_{H^{k-1}},\\
   \| (1 - \Pi \mathbb{S}_h) \vecb{w} \circ \Pi \|^2_{(\vecb{V}^0_h)^\star} & := \sup_{\vecb{\varphi}_h \in \vecb{V}^0_h} \frac{((1 - \Pi \mathbb{S}_h) \vecb{w}, \Pi \vecb{\varphi}_h)}{\|\nabla \vecb{\varphi}_h \|_{L^2(\Omega)}}
    \lesssim h^{k+1} \| \vecb{w} \|_{H^k}.
  \end{align*}
\begin{remark}
  The second norm compares \(\vecb{w}\) with its reconstructed Stokes projection.
For the exactly divergence free Scott--Vogelius element it holds \(\Pi = 1\) and
hence one obtains the same estimate as in Lemma~\ref{lem:classical}.
\end{remark}
\end{lemma}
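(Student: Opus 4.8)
The plan is to follow the template of Lemma~\ref{lem:classical}, testing both the exact system~\eqref{eq:KKTfull} and the discrete system~\eqref{eqn:probust_discretisation} with the discrete errors \(\vecb{e}_u := \mathbb{S}_h\vecb{u} - \vecb{u}_h\) and \(\vecb{e}_z := \mathbb{S}_h\vecb{z} - \vecb{z}_h\), both of which lie in \(\vecb{V}^0_h\). By the defining orthogonality~\eqref{eqn:stokes_projector} of the Stokes projector one has \(\nu(\nabla\vecb{u},\nabla\vecb{e}_u) = \nu(\nabla\mathbb{S}_h\vecb{u},\nabla\vecb{e}_u)\), so that \(\nu\|\nabla\vecb{e}_u\|^2 = \nu(\nabla\mathbb{S}_h\vecb{u},\nabla\vecb{e}_u) - \nu(\nabla\vecb{u}_h,\nabla\vecb{e}_u)\), and since \(\vecb{e}_u\in\vecb{V}^0_h\) the discrete pressure term \((\Div\vecb{e}_u, p_h)\) drops out. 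The same reduction applies to the adjoint equation with \(\vecb{e}_z\).

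The decisive, and hardest, step is to make the exact pressures \(p\) and \(\lambda\) disappear against the reconstructed test functions appearing in the discrete right-hand sides. Under the assumed \(H^2\)-regularity I would pass to the strong form of~\eqref{eq:KKTfull}, namely \(\vecb{f} - \alpha^{-1/2}\vecb{z} = -\nu\Delta\vecb{u} - \nabla p\) and \(\alpha^{-1/2}(\vecb{u} - \vecb{u}^{\rm{d}}) = -\nu\Delta\vecb{z} - \nabla\lambda\). Since \(\Pi\vecb{e}_u\) is exactly divergence-free with vanishing normal trace, the crucial orthogonality \((\nabla p, \Pi\vecb{e}_u) = 0\) holds — this is precisely the pressure-robustness mechanism — and likewise \((\nabla\lambda, \Pi\vecb{e}_z) = 0\). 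Inserting the strong form into the discrete right-hand sides, and using \(\nu(\nabla\vecb{u},\nabla\vecb{e}_u) = -\nu(\Delta\vecb{u},\vecb{e}_u)\) for the projected term, yields the two identities
\begin{align*}
\nu\|\nabla\vecb{e}_u\|^2 &= -\nu(\Delta\vecb{u}, (1-\Pi)\vecb{e}_u) - \alpha^{-1/2}(\vecb{z}-\Pi\vecb{z}_h, \Pi\vecb{e}_u),\\
\nu\|\nabla\vecb{e}_z\|^2 &= -\nu(\Delta\vecb{z}, (1-\Pi)\vecb{e}_z) + \alpha^{-1/2}(\vecb{u}-\Pi\vecb{u}_h, \Pi\vecb{e}_z).
\end{align*}

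Next I would split the mass terms via \(\vecb{z} - \Pi\vecb{z}_h = (1-\Pi\mathbb{S}_h)\vecb{z} + \Pi\vecb{e}_z\) and \(\vecb{u} - \Pi\vecb{u}_h = (1-\Pi\mathbb{S}_h)\vecb{u} + \Pi\vecb{e}_u\). Adding the two identities, the genuinely discrete coupling contributions \(-\alpha^{-1/2}(\Pi\vecb{e}_z, \Pi\vecb{e}_u)\) and \(+\alpha^{-1/2}(\Pi\vecb{e}_u, \Pi\vecb{e}_z)\) cancel by symmetry of the \(L^2\)-product — exactly the structure that, as in the classical proof, prevents any term from having to be absorbed on the left. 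What remains is
\begin{align*}
\nu|||(\vecb{e}_u,\vecb{e}_z)|||^2 &= -\nu(\Delta\vecb{u},(1-\Pi)\vecb{e}_u) - \nu(\Delta\vecb{z},(1-\Pi)\vecb{e}_z)\\
&\quad - \alpha^{-1/2}((1-\Pi\mathbb{S}_h)\vecb{z},\Pi\vecb{e}_u) + \alpha^{-1/2}((1-\Pi\mathbb{S}_h)\vecb{u},\Pi\vecb{e}_z).
\end{align*}
Each factor is bounded by the corresponding dual norm from the statement times \(\|\nabla\vecb{e}_u\|\) or \(\|\nabla\vecb{e}_z\|\); a two-component Cauchy--Schwarz then bounds the right-hand side by \(\big[\nu(\|\Delta\vecb{u}\circ(1-\Pi)\|^2_{(\vecb{V}^0_h)^\star}+\|\Delta\vecb{z}\circ(1-\Pi)\|^2_{(\vecb{V}^0_h)^\star})^{1/2} + \alpha^{-1/2}(\cdots)^{1/2}\big]\,|||(\vecb{e}_u,\vecb{e}_z)|||\), and division by \(\nu\,|||(\vecb{e}_u,\vecb{e}_z)|||\) yields the asserted estimate, the factor \(\nu\) on the consistency terms cancelling as claimed.

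Finally, the convergence rates for the dual norms are routine consequences of the stated properties: the \(\Delta\vecb{w}\)-terms follow directly from~\eqref{eqn:consistency_reconst} with \(\vecb{g} = \Delta\vecb{w}\), and the \((1-\Pi\mathbb{S}_h)\vecb{w}\)-terms follow by writing \((1-\Pi\mathbb{S}_h)\vecb{w} = (1-\Pi)\vecb{w} + \Pi(\vecb{w}-\mathbb{S}_h\vecb{w})\) and combining the approximation estimate~\eqref{eqn:estimate_reconst}, the \(L^2\) Stokes-projection estimate~\eqref{eqn:l2estimate_stokesprojector}, and the stability~\eqref{eqn:stability_reconst} of \(\Pi\). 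I expect the main obstacle to be the pressure-annihilation step: it relies on the exactly divergence-free, normal-trace-free reconstruction together with the strong form, hence on the \(H^2\)-regularity needed to give \(\Delta\vecb{u},\Delta\vecb{z}\in L^2(\Omega)\) a meaning; once that orthogonality is in place, the remaining manipulations are bookkeeping around the cancellation of the coupling terms.
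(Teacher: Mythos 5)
Your proposal follows the paper's proof essentially step for step: the same testing with \(\mathbb{S}_h\vecb{u}-\vecb{u}_h\), \(\mathbb{S}_h\vecb{z}-\vecb{z}_h\) combined with the Stokes-projector orthogonality, the same use of the strong form together with \((\nabla p,\Pi\vecb{\varphi}_h)=0\) and \((\nabla\lambda,\Pi\vecb{\varphi}_h)=0\), the same splitting of the mass terms with cancellation of the \((\Pi\vecb{e}_u,\Pi\vecb{e}_z)\) coupling terms, and the same ingredients for the consistency-error rates. One small caution on the last step: do not apply the stability bound \eqref{eqn:stability_reconst} directly to \(\|\Pi(\vecb{w}-\mathbb{S}_h\vecb{w})\|_{L^2}\) (that only yields \(h^{k}\)); instead split it further as \((\vecb{w}-\mathbb{S}_h\vecb{w})-(1-\Pi)(\vecb{w}-\mathbb{S}_h\vecb{w})\) and use \eqref{eqn:l2estimate_stokesprojector} and \eqref{eqn:estimate_reconst} — which reproduces exactly the paper's three-term decomposition — while \eqref{eqn:stability_reconst} is needed only to reduce the dual norm \(\sup_{\vecb{\varphi}_h}(\cdot,\Pi\vecb{\varphi}_h)/\|\nabla\vecb{\varphi}_h\|\) to the \(L^2\)-norm.
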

\begin{proof}[Proof of Lemma~\ref{lem:fullyrobust}]
Using \eqref{eqn:stokes_projector} and
testing the first equation of \eqref{eqn:probust_discretisation} with
\(\vecb{\varphi}_h = \mathbb{S}_h\vecb{u} - \vecb{u}_h\in \vecb{V}^0_h\), and using \(\vecb{f} = - \nu \Delta \vecb{u} - \nabla p + \alpha^{-1/2}\vecb{z}\)
and \((\nabla p, \Pi \vecb{\varphi}_h) = 0\) reveals
\begin{align*}
\nu \| \nabla( \mathbb{S}_h\vecb{u} - \vecb{u}_h) \|^2_{L^2(\Omega)}
 =&\; \nu (\nabla(\vecb{u} - \vecb{u}_h), \nabla(\mathbb{S}_h\vecb{u} - \vecb{u}_h))\\
 =&\; - \nu (\Delta \vecb{u}, \mathbb{S}_h\vecb{u} - \vecb{u}_h)
- (\vecb{f} -\alpha^{-1/2}\Pi \vecb{z}_h, \Pi(\mathbb{S}_h\vecb{u} - \vecb{u}_h))\\
 =&\; - \nu (\Delta \vecb{u}, (1-\Pi)(\mathbb{S}_h\vecb{u} - \vecb{u}_h))\\
&\;- \alpha^{-1/2}(\vecb{z} - \Pi \vecb{z}_h, \Pi(\mathbb{S}_h\vecb{u} - \vecb{u}_h)).
\end{align*}
Analogously, one obtains
\begin{align*}
\nu \| \nabla( \mathbb{S}_h\vecb{z} - \vecb{z}_h) \|^2_{L^2(\Omega)}
& = -\nu (\Delta \vecb{z}, (1-\Pi)(\mathbb{S}_h\vecb{z} - \vecb{z}_h))
+ \alpha^{-1/2} (\vecb{u} - \Pi \vecb{u}_h, \Pi(\mathbb{S}_h\vecb{z} - \vecb{z}_h)).
\end{align*}
Observe that
\begin{align*}
  - \nu (\Delta \vecb{u}, (1-\Pi)(\mathbb{S}_h\vecb{u} - \vecb{u}_h))
  &- \nu (\Delta \vecb{z}, (1-\Pi)(\mathbb{S}_h\vecb{z} - \vecb{z}_h))\\
  \leq&\; \nu \| \Delta \vecb{u} \circ (1 - \Pi) \|_{{\vecb{V}^0_h}^\star} \| \nabla( \mathbb{S}_h\vecb{u} - \vecb{u}_h) \|_{L^2(\Omega)}\\
  &+ \nu \| \Delta \vecb{z} \circ (1 - \Pi) \|_{{\vecb{V}^0_h}^\star} \| \nabla( \mathbb{S}_h\vecb{z} - \vecb{z}_h) \|_{L^2(\Omega)}\\
   \leq&\; \nu \left(\| \Delta \vecb{u} \circ (1 - \Pi)
    \|^2_{{\vecb{V}^0_h}^\star} +  \| \Delta \vecb{z} \circ (1 - \Pi)
    \|^2_{{\vecb{V}^0_h}^\star} \right)^{1/2} \\
  &\;\times |||( \mathbb{S}_h\vecb{u} - \vecb{u}_h,\mathbb{S}_h\vecb{z} - \vecb{z}_h)|||.
\end{align*}
Similar to the Galerkin case, we have the additional higher order terms
(with factor \(-\alpha^{-1/2}\))
\begin{align*}
A :=  (\vecb{u} - \Pi \vecb{u}_h, \Pi(\mathbb{S}_h\vecb{z} -
\vecb{z}_h)) - (\vecb{z} - \Pi \vecb{z}_h, \Pi(\mathbb{S}_h\vecb{u} - \vecb{u}_h)).
\end{align*}
Again, some manipulations reveal
\begin{align*}
(\vecb{z} - \Pi \vecb{z}_h, \Pi(\mathbb{S}_h\vecb{u} - \vecb{u}_h))
& = (\vecb{z} - \Pi\mathbb{S}_h \vecb{z}, \Pi(\mathbb{S}_h\vecb{u} - \vecb{u}_h))
+ (\Pi(\mathbb{S}_h \vecb{z} -\vecb{z}_h), \Pi(\mathbb{S}_h\vecb{u} - \vecb{u}_h))\\
(\vecb{u} - \Pi \vecb{u}_h, \Pi(\mathbb{S}_h\vecb{z} - \vecb{z}_h))
& = (\vecb{u} - \Pi\mathbb{S}_h \vecb{u}, \Pi(\mathbb{S}_h\vecb{z} - \vecb{z}_h))
+ (\Pi(\mathbb{S}_h \vecb{u} -\vecb{u}_h), \Pi(\mathbb{S}_h\vecb{z} - \vecb{z}_h))
\end{align*}
and the subtraction of both lines leads to
\begin{align*}
|A| \leq&\; \left( \| (1 - \Pi \mathbb{S}_h) \vecb{u} \circ \Pi \|^2_{(\vecb{V}^0_h)^\star} + \| (1 - \Pi \mathbb{S}_h) \vecb{z} \circ \Pi \|^2_{(\vecb{V}^0_h)^\star}\right)^{1/2}\\
&\;\times ||| ( \mathbb{S}_h\vecb{u} - \vecb{u}_h, \mathbb{S}_h\vecb{z} - \vecb{z}_h) |||.
\end{align*}
The combination of these estimates concludes the proof of the
first claim and it remains to show the bounds for the consistency errors.
The first bound follows from \eqref{eqn:consistency_reconst} and the second bound can be estimated as follows.
The stability estimate
\eqref{eqn:stability_reconst} and triangle inequalities yield
\begin{align*}
    \sup_{\vecb{\varphi}_h \in \vecb{V}^0_h} \frac{( \vecb{w} - \Pi \mathbb{S}_h \vecb{w}, \Pi \vecb{\varphi}_h)}{\|\nabla \vecb{\varphi}_h \|_{L^2(\Omega)}}
    & \lesssim \|  \vecb{w} - \Pi \mathbb{S}_h \vecb{w} \|_{L^2}\\
    & \leq \|  \vecb{w} - \mathbb{S}_h \vecb{w} \|_{L^2}
    + \| \vecb{w} - \Pi \vecb{w} \|_{L^2}    
    + \| (1- \Pi) (\vecb{w} - \mathbb{S}_h \vecb{w}) \|_{L^2}\\
    & \lesssim \|  \vecb{w} - \mathbb{S}_h \vecb{w} \|_{L^2}
    + \| \vecb{w} - \Pi \vecb{w} \|_{L^2}    
    + h \| \nabla (\vecb{w} - \mathbb{S}_h \vecb{w}) \|_{L^2}. \end{align*}
The claimed estimate now follows from \eqref{eqn:estimate_reconst} and \eqref{eqn:l2estimate_stokesprojector}.
\end{proof}

\section{Numerical examples}\label{sec:numerics}\ \\
This section visualizes the theoretical results in two numerical examples that were conducted with the open source Julia package  \texttt{GradientRobustMultiPhysics.jl}.

To distinguish all three schemes, a common formulation is given by
\begin{align*}
  \min_{(\vecb{q}_h,\vecb{u}_h,p_h) \in \vecb{Q} \times \vecb{V}_h\times Q_h}
    &\;\frac{1}{2}\|\Pi_1 \vecb{u}_h-\vecb{u}^{\rm{d}}\|_{\vecb{L}^2(\Omega)}^2
    + \frac{\alpha}{2}\|\vecb{q}_h\|_{\vecb{L}^2(\Omega)}^2\\
    \text{s.t.}&\; \left\{
      \begin{aligned}
        \nu (\nabla \vecb{u}_h,\nabla \vecb{\varphi}_h) + (p_h,\Div
        \vecb{\varphi}_h) &=   (\vecb{f}+\vecb{q}_h,\Pi_2\vecb{\varphi}_h) &
        \forall&\vecb{\varphi}_h\in \vecb{V}_h,\\
        (\Div\vecb{u}_h, \psi_h) &= 0 & \forall&\psi_h \in Q_h,
      \end{aligned}\right.
  \end{align*}
where the classical scheme employs \(\Pi_1/\Pi_2 =
\mathrm{id}/\mathrm{id}\), the partially pressure-robust scheme
employs \(\Pi_1/\Pi_2 = \mathrm{id}/\Pi\) and the new fully pressure-robust scheme employs \(\Pi_1/\Pi_2 = \Pi/\Pi\).

\begin{figure}
\flushleft
\begin{tabular}{lll}
\fbox{$\alpha = 10^{-6}, \nu = 1\phantom{0^{-0}}$} &
\hspace{21ex}					   &
\fbox{$\alpha = 10^{-1}, \nu = 1\phantom{0^{-0}}$} 
\end{tabular}

\includegraphics[width=0.27\textwidth]{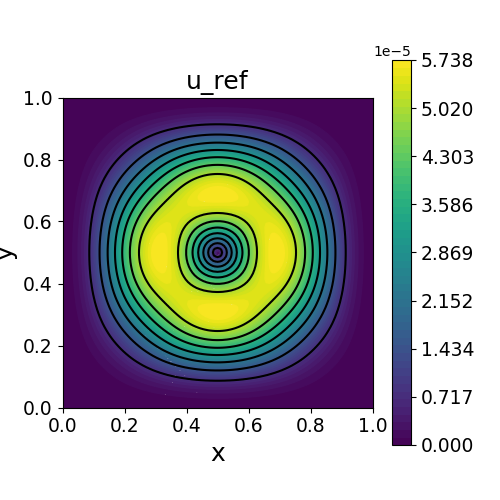}
\hspace{-2ex}
\includegraphics[width=0.22\textwidth, trim=5mm 0mm 0mm 0mm, clip]{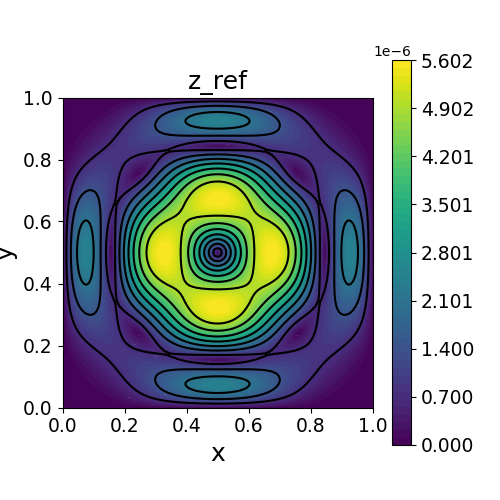}
\includegraphics[width=0.27\textwidth]{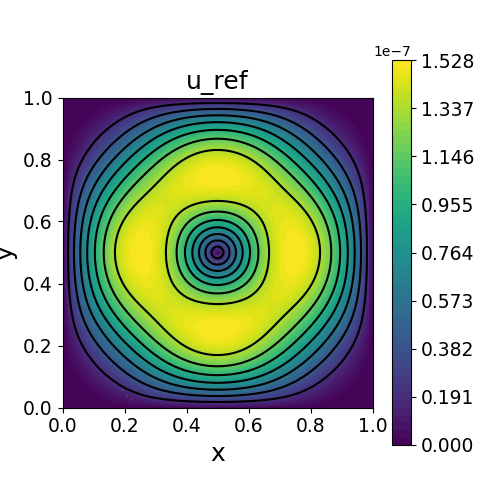}
\hspace{-2ex}
\includegraphics[width=0.22\textwidth, trim=5mm 0mm 0mm 0mm, clip]{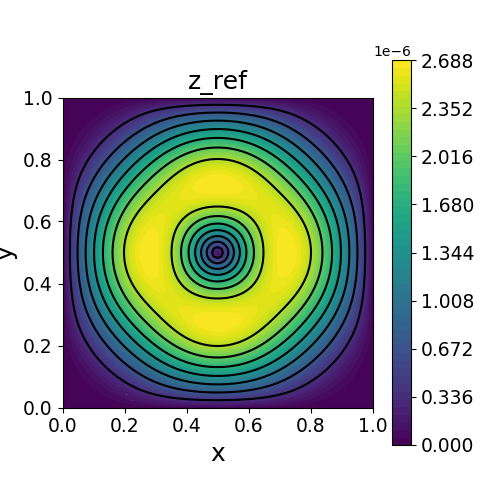}

\begin{tabular}{lll}
\fbox{$\alpha = 10^{-6}, \nu = 10^{-3}$} &
\hspace{21ex}					   &
\fbox{$\alpha = 10^{-1}, \nu = 10^{-3}$} 
\end{tabular}

\includegraphics[width=0.27\textwidth]{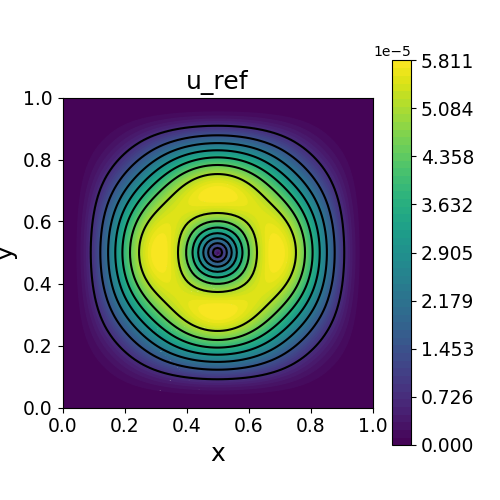}
\hspace{-2ex}
\includegraphics[width=0.22\textwidth, trim=5mm 0mm 0mm 0mm, clip]{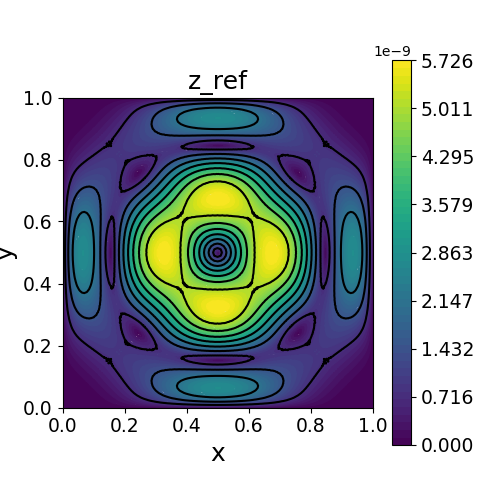}
\includegraphics[width=0.27\textwidth]{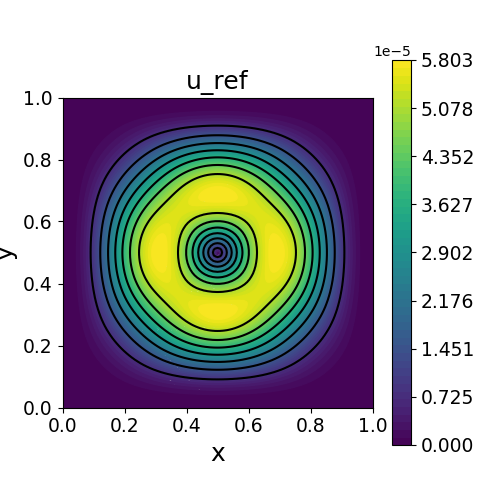}
\hspace{-2ex}
\includegraphics[width=0.22\textwidth, trim=5mm 0mm 0mm 0mm, clip]{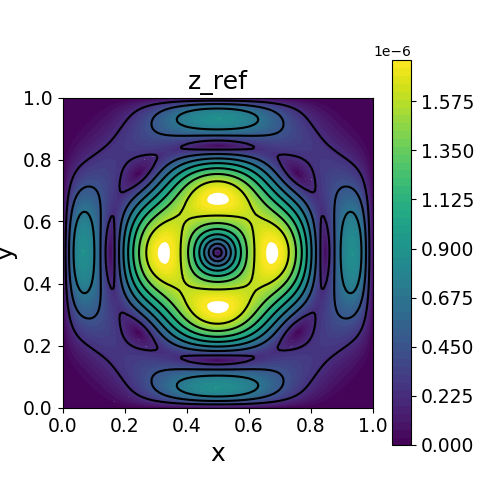}

\caption{\label{fig:ex1_refsolutions}Example 1: Reference solutions for \(\vecb{u}\) (larger images) and \(\vecb{z}\) (smaller images) for \(\nu = 1\) (top row) and \(\nu = 10^{-3}\) (bottom row) and \(\alpha = 10^{-6}\) (left column) and \(\alpha = 10^{-1}\) (right column), and independent of \(\epsilon\).}
\end{figure}

\begin{figure}
\flushleft
\begin{tabular}{lll}
\fbox{$\alpha = 10^{-1}$, classical} &
\hspace{24ex}					   &
\fbox{$\alpha = 10^{-1}$, fully p-robust} 
\end{tabular}
\includegraphics[width=0.27\textwidth]{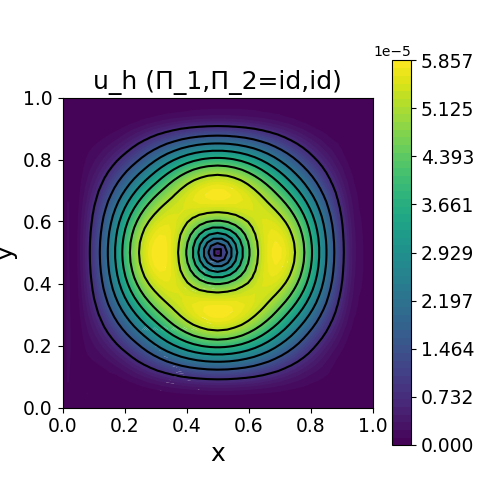}
\hspace{-2ex}
\includegraphics[width=0.22\textwidth, trim=5mm 0mm 0mm 0mm, clip]{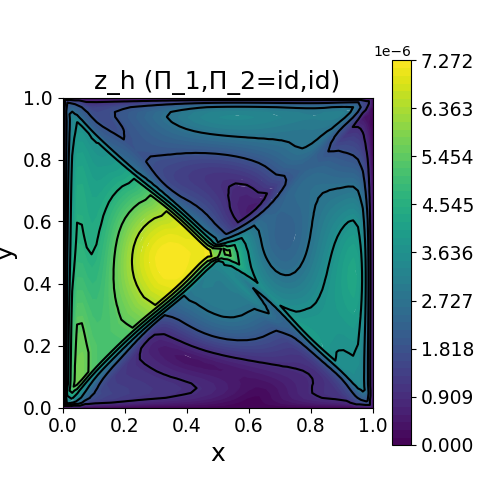}
\hfill
\includegraphics[width=0.27\textwidth]{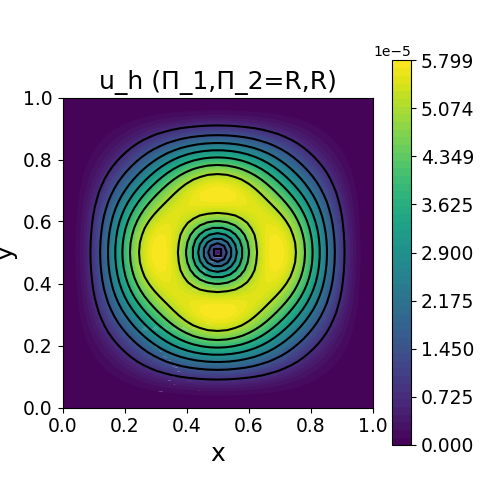}
\hspace{-2ex}
\includegraphics[width=0.22\textwidth, trim=5mm 0mm 0mm 0mm, clip]{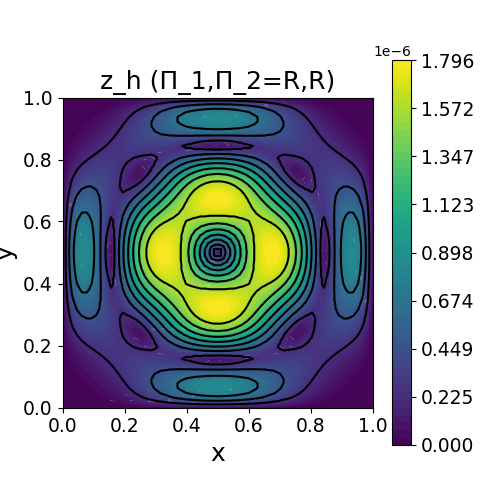}

\begin{tabular}{lll}
\fbox{$\alpha = 10^{-3}$, classical} &
\hspace{24ex}					   &
\fbox{$\alpha = 10^{-3}$, fully p-robust} 
\end{tabular}
\includegraphics[width=0.27\textwidth]{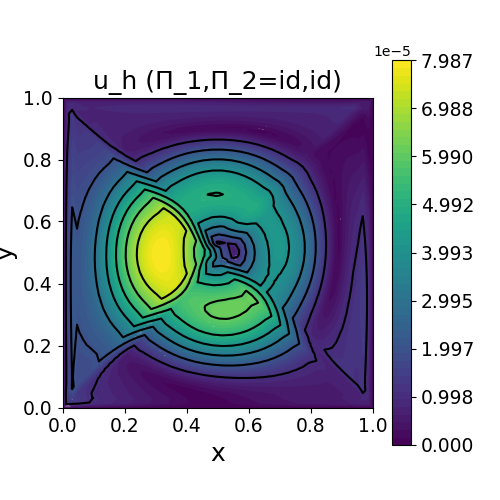}
\hspace{-2ex}
\includegraphics[width=0.22\textwidth, trim=5mm 0mm 0mm 0mm, clip]{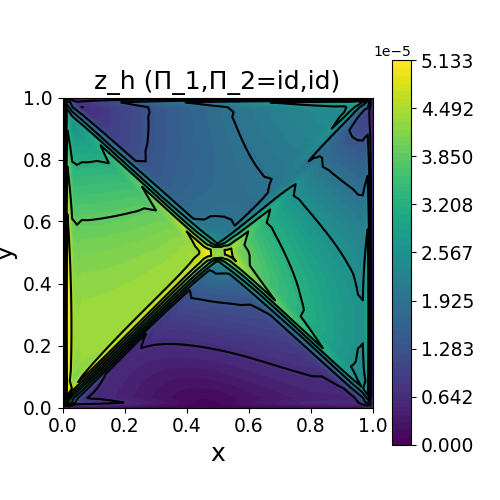}
\hfill
\includegraphics[width=0.27\textwidth]{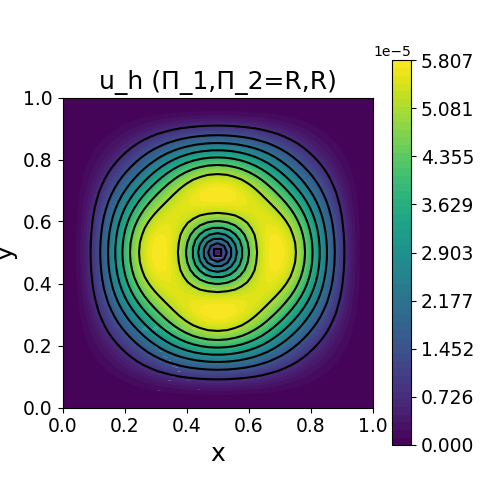}
\hspace{-2ex}
\includegraphics[width=0.22\textwidth, trim=5mm 0mm 0mm 0mm, clip]{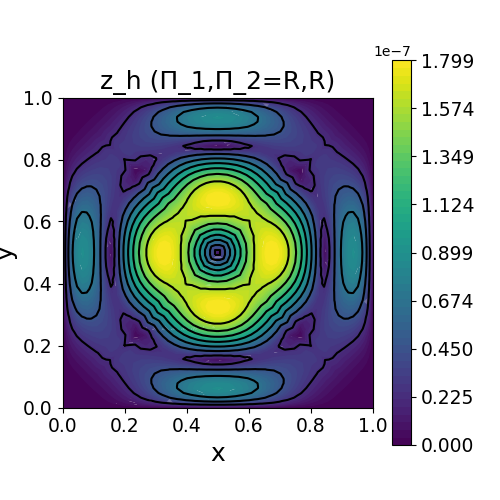}

\begin{tabular}{lll}
\fbox{$\alpha = 10^{-6}$, classical} &
\hspace{24ex}					   &
\fbox{$\alpha = 10^{-6}$, fully p-robust} 
\end{tabular}
\includegraphics[width=0.27\textwidth]{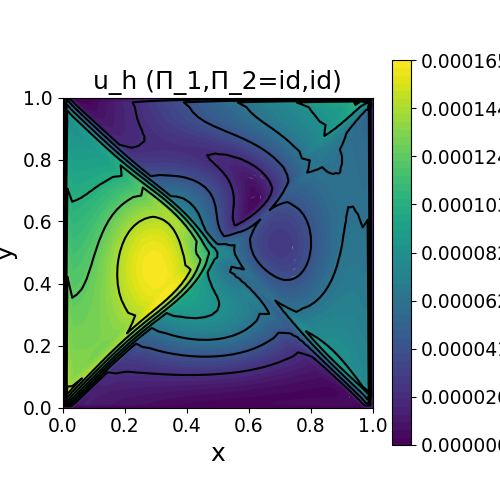}
\hspace{-2ex}
\includegraphics[width=0.22\textwidth, trim=5mm 0mm 0mm 0mm, clip]{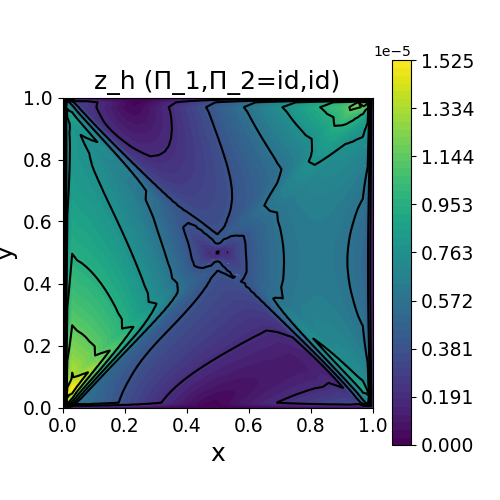}
\hfill
\includegraphics[width=0.27\textwidth]{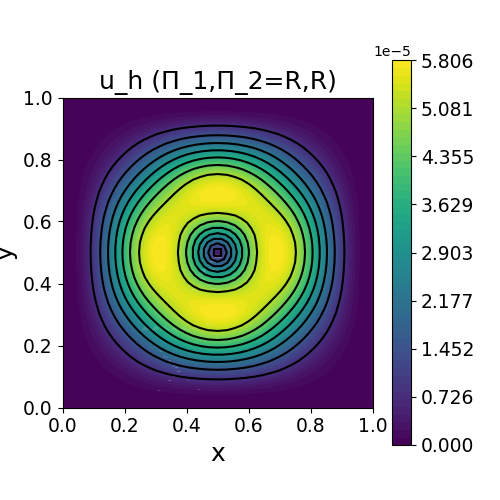}
\hspace{-2ex}
\includegraphics[width=0.22\textwidth, trim=5mm 0mm 0mm 0mm, clip]{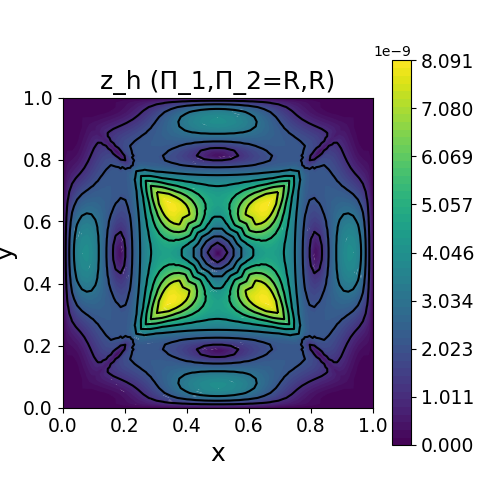}

\caption{\label{fig:ex1_images_br}Example 1: Discrete solutions $\vecb{u}_h$ (larger images) and $\vecb{z}_h$ (smaller images) for classical (left) and fully pressure-robust (right) Bernardi--Raugel method for \(\epsilon = 10^{-4}\), \(\nu = 10^{-3}\) and \(\alpha = 10^{-1}, 10^{-3}, 10^{-6}\) (from top to bottom).}
\end{figure}

\begin{figure}
\flushleft
\hspace{35ex} \fbox{classical scheme}

\vspace{-3ex}
\includegraphics[width=0.36\textwidth]{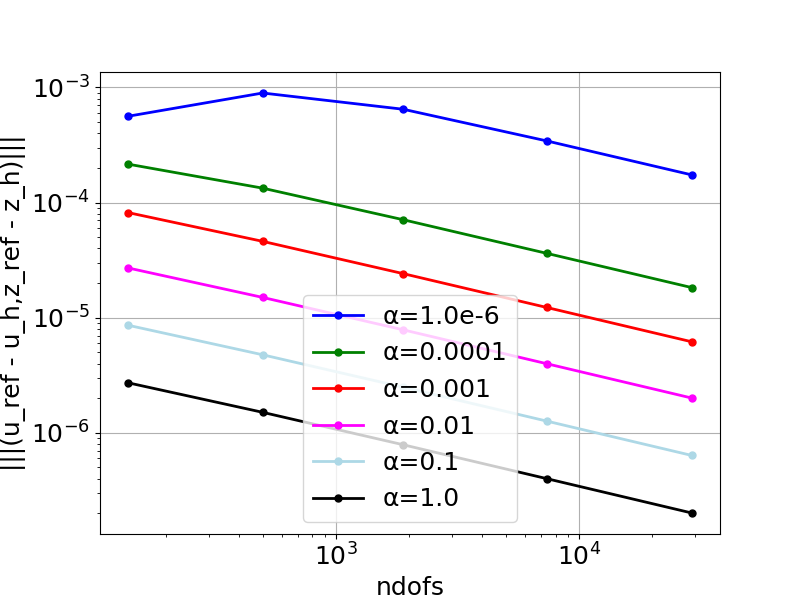}
\includegraphics[width=0.31\textwidth]{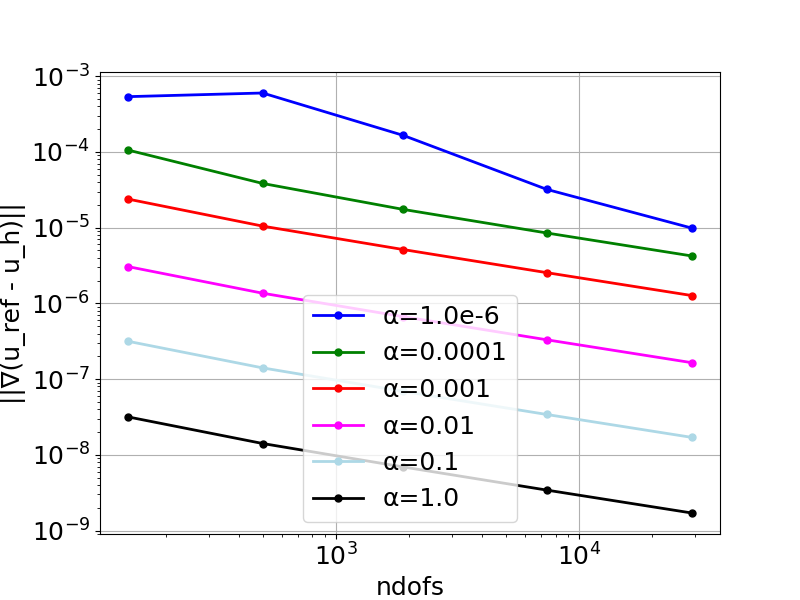}
\includegraphics[width=0.31\textwidth]{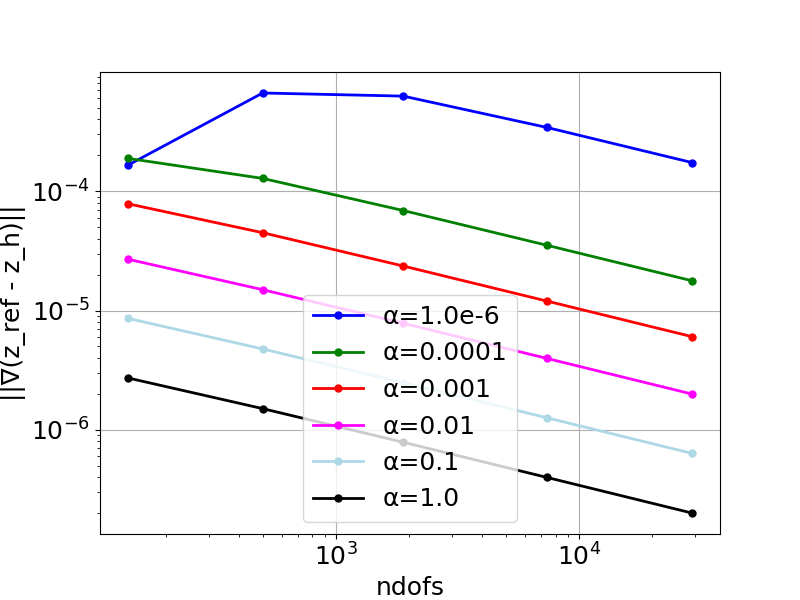}

\vspace{2ex}
\hspace{35ex} \fbox{partially p-robust scheme}

\vspace{-3ex}
\includegraphics[width=0.36\textwidth]{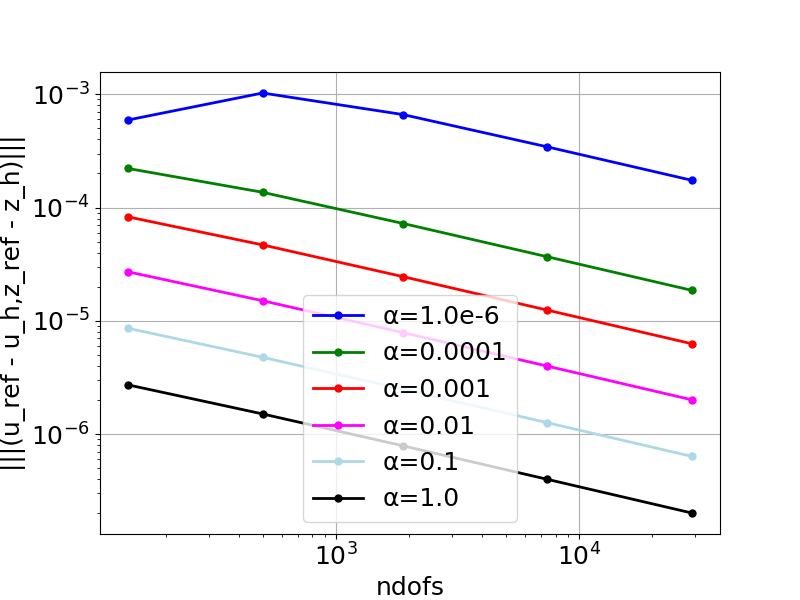}
\includegraphics[width=0.31\textwidth]{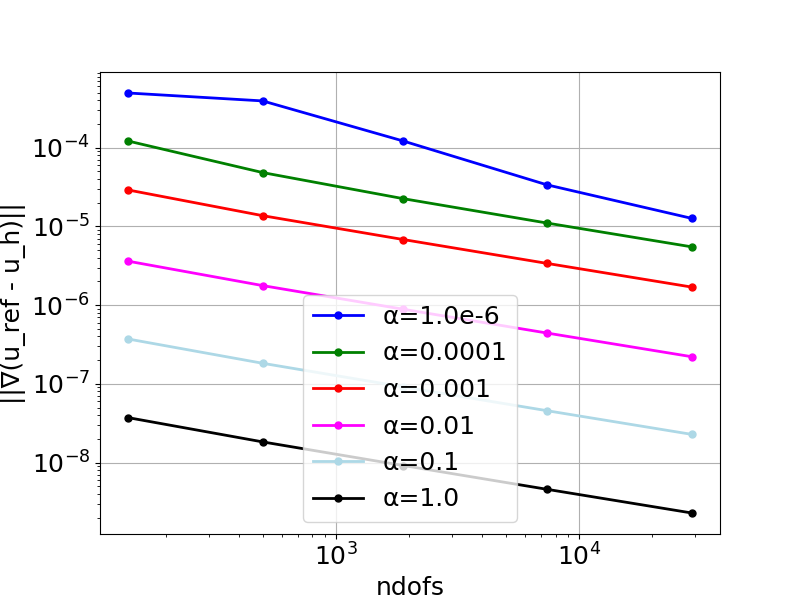}
\includegraphics[width=0.31\textwidth]{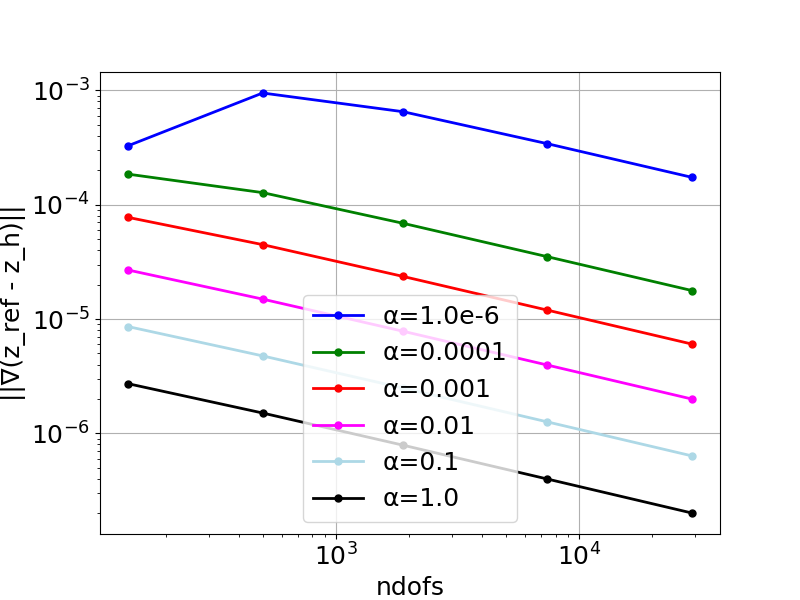}

\vspace{2ex}
\hspace{35ex} \fbox{fully p-robust scheme}

\vspace{-3ex}
\includegraphics[width=0.36\textwidth]{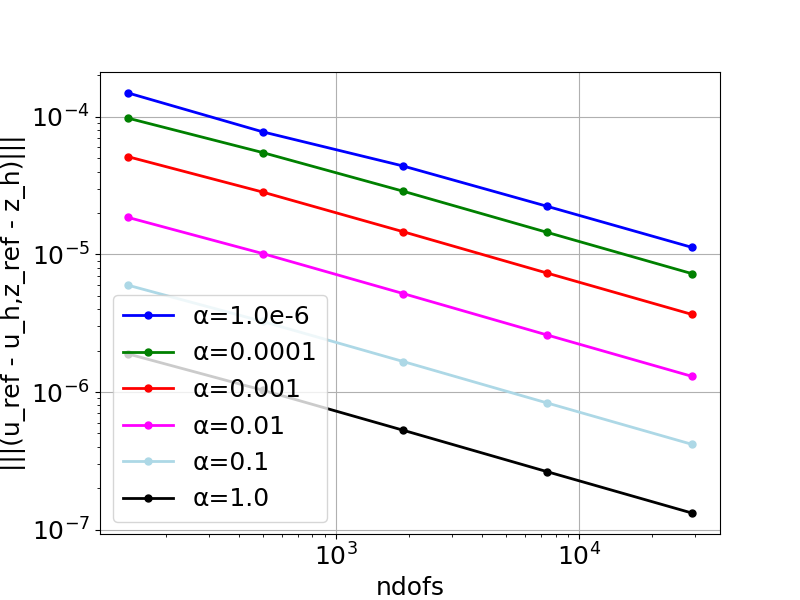}
\includegraphics[width=0.31\textwidth]{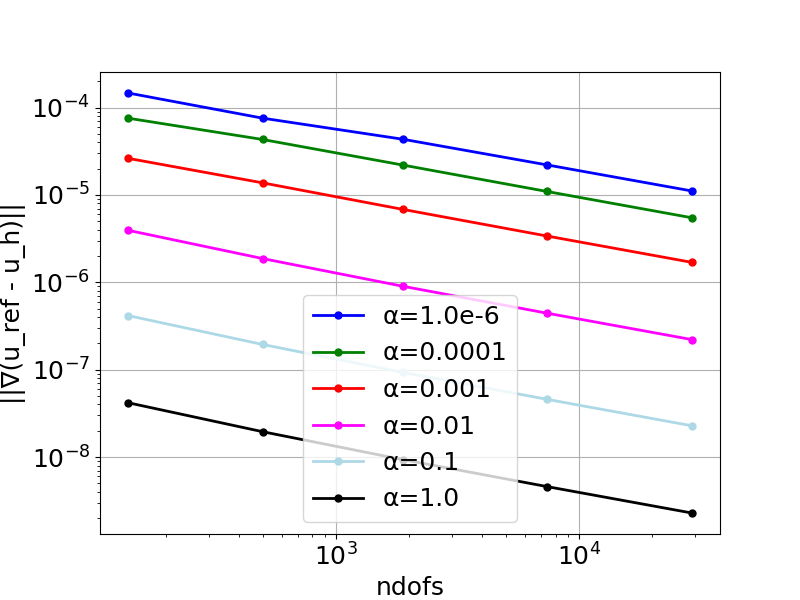}
\includegraphics[width=0.31\textwidth]{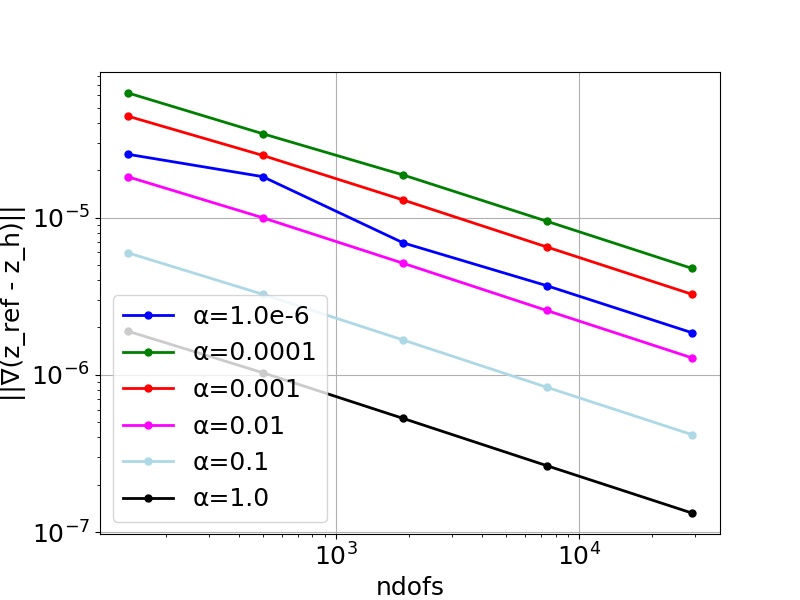}
\caption{\label{fig:ex1_convhist_mu0}Example 1: Convergence histories for the classical (top row),  partially pressure-robust (middle row) and fully pressure-robust (bottom row) Bernardi--Raugel methods for \(\epsilon = 10^{-4}\), \(\nu = 1\) and various choices of \(\alpha\). The first, second and third column depict the total energy error, velocity error, and the control error, respectively.}
\end{figure}

\begin{figure}

\flushleft
\hspace{35ex} \fbox{classical scheme}

\vspace{-3ex}
\includegraphics[width=0.36\textwidth]{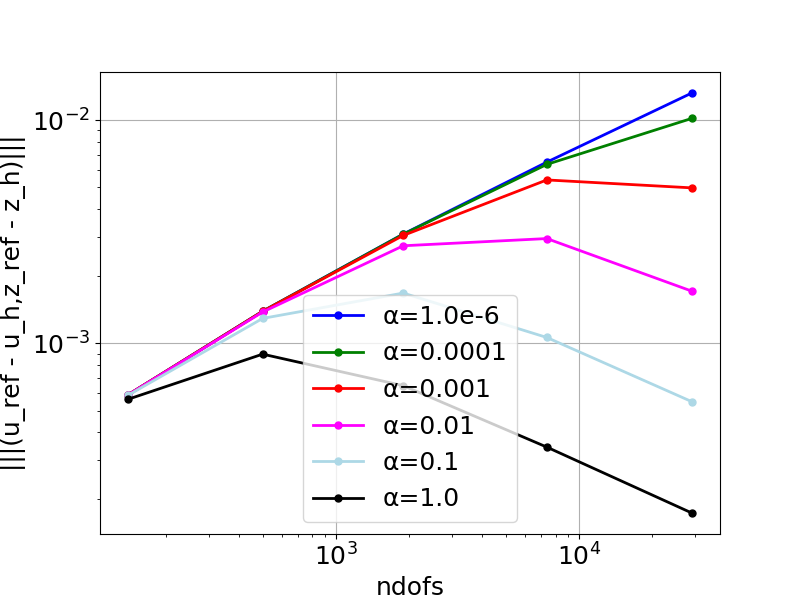}
\includegraphics[width=0.31\textwidth]{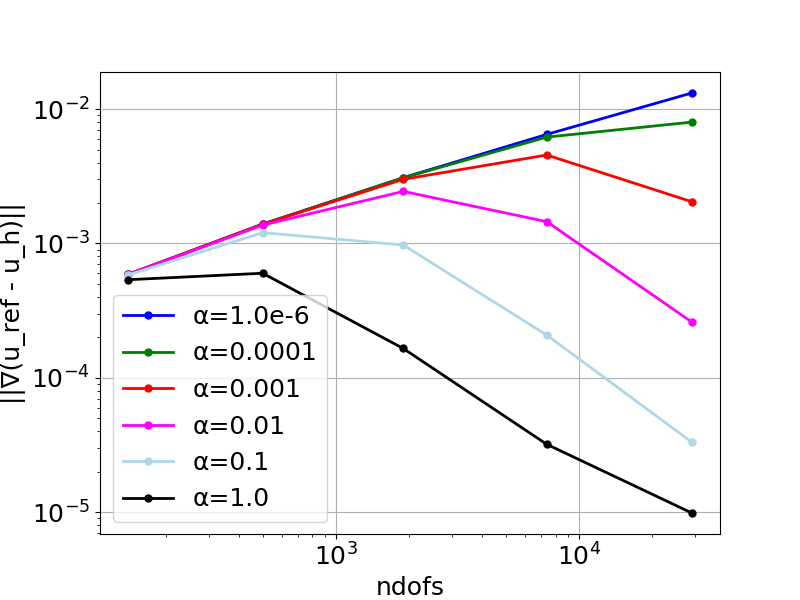}
\includegraphics[width=0.31\textwidth]{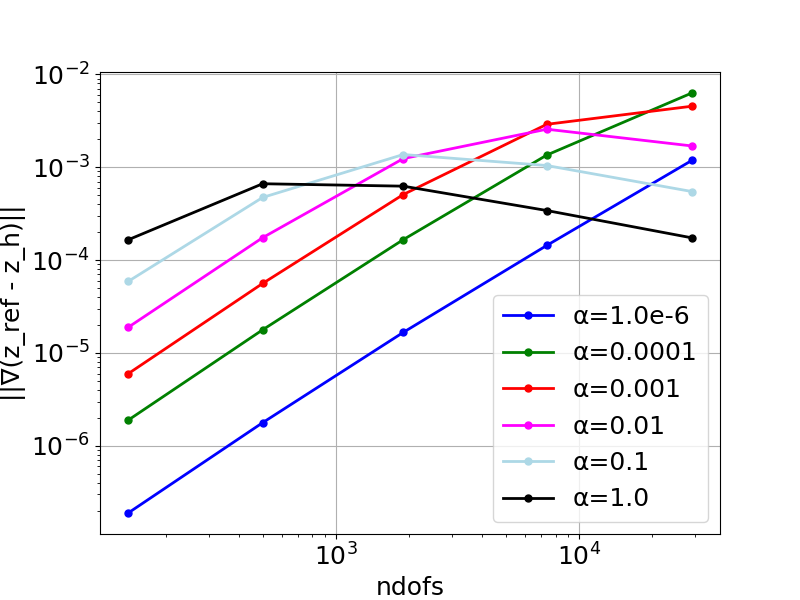}

\vspace{2ex}
\hspace{35ex} \fbox{partially p-robust scheme}

\vspace{-3ex}
\includegraphics[width=0.36\textwidth]{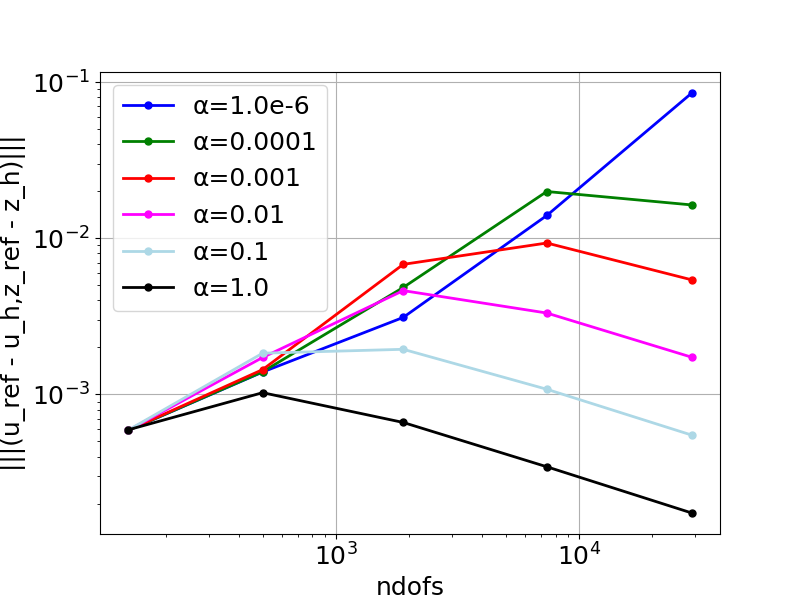}
\includegraphics[width=0.31\textwidth]{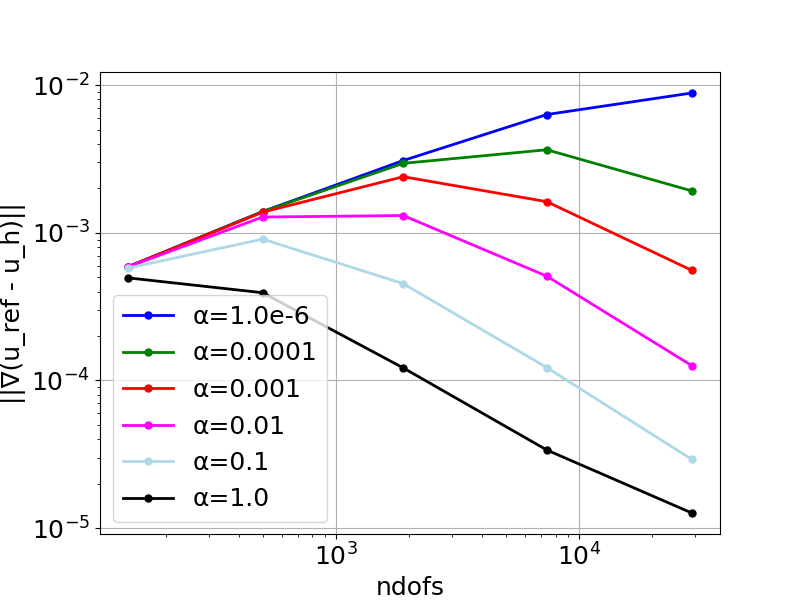}
\includegraphics[width=0.31\textwidth]{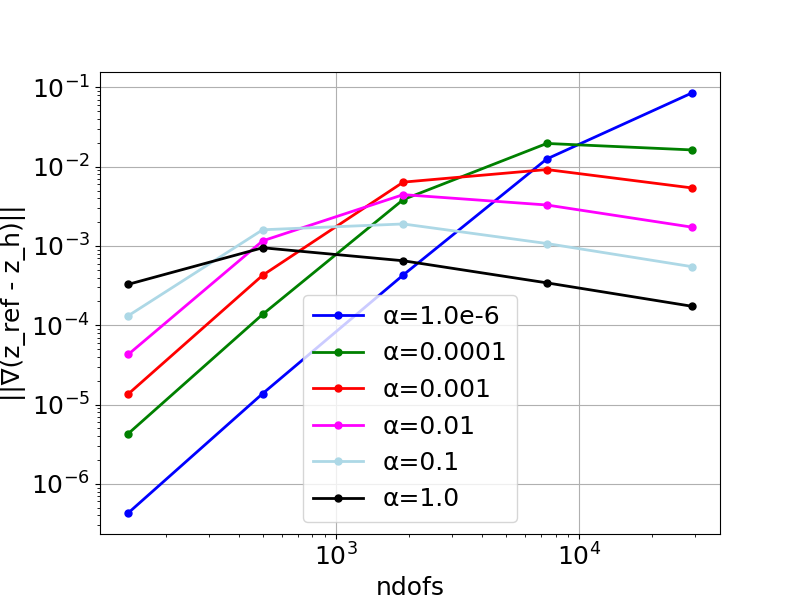}

\vspace{2ex}
\hspace{35ex} \fbox{fully p-robust scheme}

\vspace{-3ex}
\includegraphics[width=0.36\textwidth]{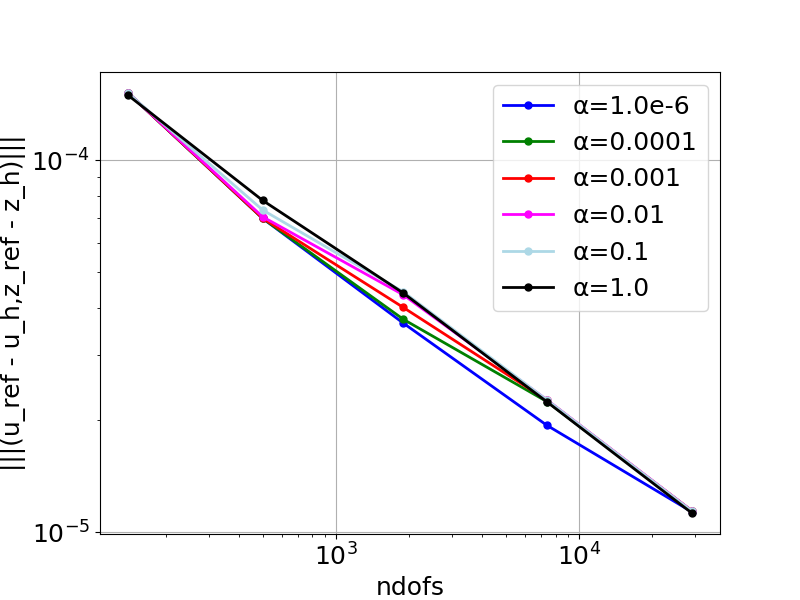}
\includegraphics[width=0.31\textwidth]{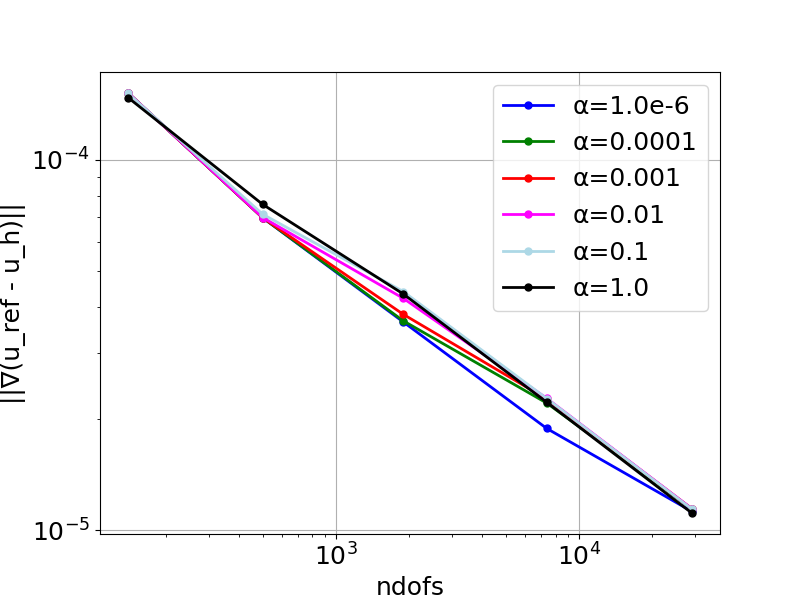}
\includegraphics[width=0.31\textwidth]{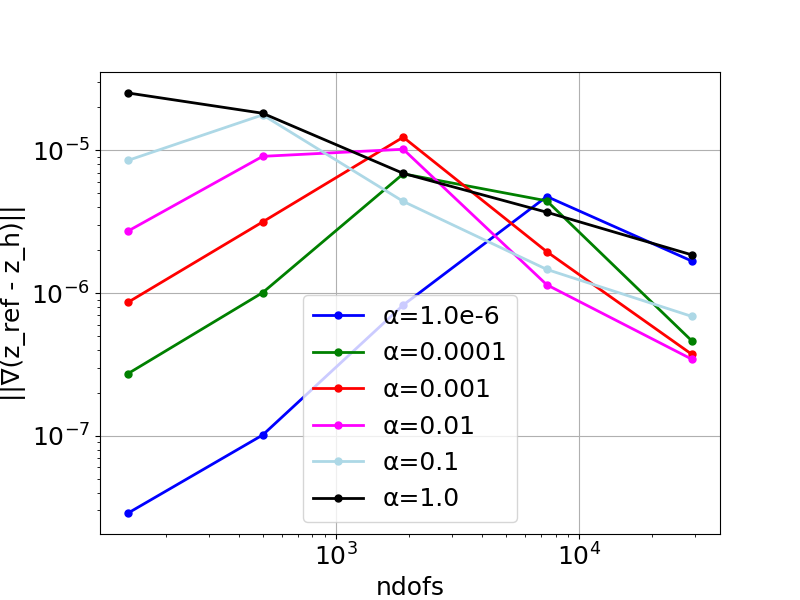}
\caption{\label{fig:ex1_convhist_mu3}Example 1: Convergence histories for the classical (top row),  partially pressure-robust (middle row) and fully pressure-robust (bottom row) Bernardi--Raugel methods for \(\epsilon = 10^{-4}\), \(\nu = 10^{-3}\) and various choices of \(\alpha\). The first, second and third column depict the total energy error, velocity error, and the control error, respectively.}
\end{figure}

\begin{figure}
\includegraphics[width=0.4\textwidth]{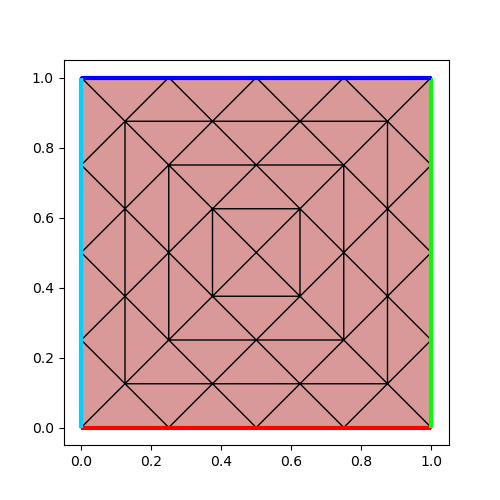}
\includegraphics[width=0.4\textwidth]{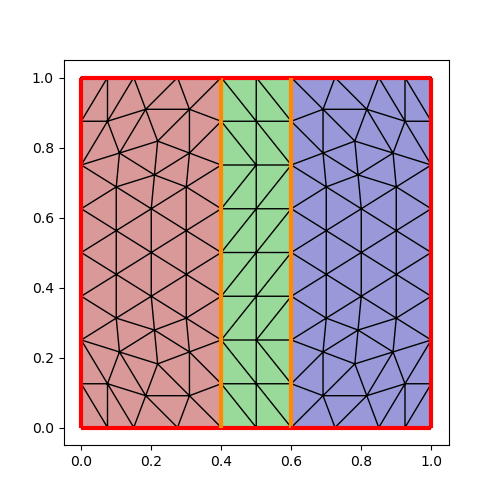}
\caption{\label{fig:grids}Coarsest grids used in Example~1 (left) and Example~2 (right).}
\end{figure}

\subsection{Example 1}
This example studies the prescribed polynomial solution
\begin{align*}
  \vecb{u}(x,y) = \mathrm{curl} (x^4(x-1)^4y^4(y-1)^4)
\end{align*}
of the Stokes problem \(- \nu \Delta \vecb{u} = \vecb{q}\) on the unit square \(\Omega = (0,1)^2\). The solution satisfies \(\vecb{q} := - \nu \Delta \vecb{u} \in H^1_0(\Omega)\)
and therefore \((\vecb{q}, \vecb{u})\) minimizes the objective functional for \(\alpha = 0\), \(\vecb{f} = 0\) and \(\vecb{u}^d := \vecb{u}\).
For \(\alpha > 0\), the exact minimizer is unknown and a reference solution is computed on a very fine grid with the second-order divergence-free Scott-Vogelius finite element method.
To study pressure-robustness, we perturb the data with some irrotational gradient field
\begin{align*}
   \vecb{u}^d(x,y) := \vecb{u}(x,y) + \epsilon\nabla(\cos(x)\sin(y))
\end{align*}
for different choices of \(\epsilon \geq 0\). From the analysis it is
clear that any divergence-free scheme like the Scott-Vogelius element
ignores the irrotational part in the data and therefore is independent
of \(\epsilon\). Figure~\ref{fig:ex1_refsolutions} shows some
reference solutions for fixed \(\nu = 1\) and different choices of \(\alpha\).

Figures~\ref{fig:ex1_convhist_mu0} shows the convergence history for \(\nu = 1\), where all methods under consideration perform very similar. Only in the case \(\alpha = 10^{-6}\) and \(\alpha = 10^{-4}\) the errors for \(\vecb{q}\) of the classical method and the partially pressure-robust method are worse than the error of the pressure-robust method. Also the velocity error behaves a bit suboptimal pre-asymptotically in these cases.

Figure~\ref{fig:ex1_images_br} shows some discrete solutions of the
classical scheme and the fully pressure-robust scheme with \(\epsilon
= 10^{-4}\) and moderate viscosity \(\nu = 10^{-3}\) and different
choices of \(\alpha\). For smaller \(\alpha\) significant errors in
\(\vecb{u}\) and \(\vecb{z}\) can be seen, while for \(\alpha = 0.1\)
only \(\vecb{z}\) looks heavily distorted in the classical scheme. 
For the smallest \(\alpha = 10^{-6}\) also the \(\vecb{z}\) of the
full robust scheme looks different than the reference solution, but at
least it looks symmetric and the magnitude is matched. Images for the
partially pressure-robust scheme are not presented, but look very
similar to those for the classical one.
The observations are also inline with the convergence histories in Figure~\ref{fig:ex1_convhist_mu3}. Here the situation for the classical and also the partially pressure-robust method is dramatically different. The error in the energy norm even diverges pre-asymptotically (the effect scales with \(\alpha^{-1}\) and becomes more pronounced for smaller \(\nu\). Only the fully pressure-robust method shows optimal convergence rates in the full range of tested parameters.

\subsection{Example 2}
Consider a unit square \(\Omega = (0,1)^2 = \Omega_C \cup
\Omega_F \cup \Omega_O\) decomposed into a control region \(\Omega_C = (0,2/5)
\times (0,1)\), a free region \(\Omega_F = (2/5,3/5) \times (0,1)\)
and an observation region \(\Omega_O = (3/5,1) \times (0,1)\). The
right part of Figure~\ref{fig:grids} shows a coarse triangulation
where these regions are marked with red, green and blue color in the
mentioned order.

\begin{figure}
\flushleft
\begin{tabular}{lll}
\fbox{$\alpha = 10^{-1}$, classical} &
\hspace{24ex}					   &
\fbox{$\alpha = 10^{-1}$, fully p-robust} 
\end{tabular}
\includegraphics[width=0.27\textwidth]{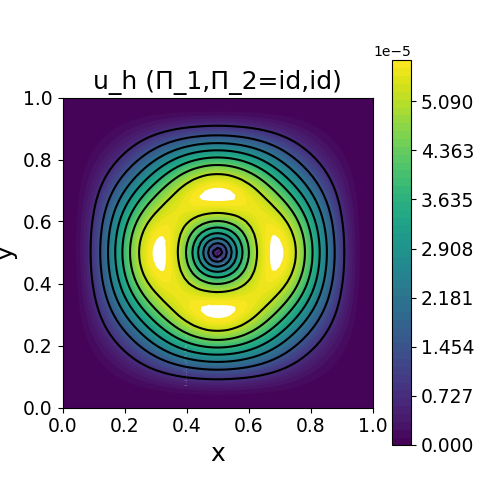}
\hspace{-2ex}
\includegraphics[width=0.22\textwidth, trim=5mm 0mm 0mm 0mm, clip]{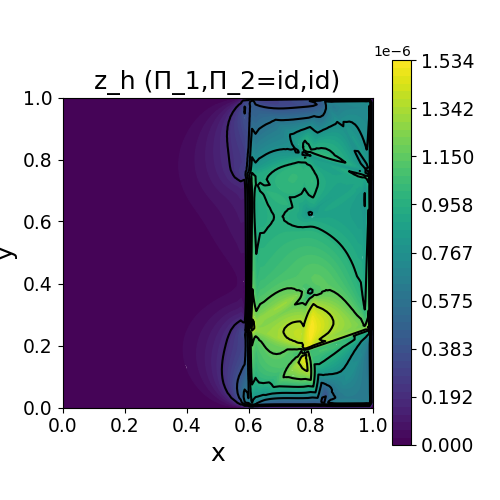}
\hfill
\includegraphics[width=0.27\textwidth]{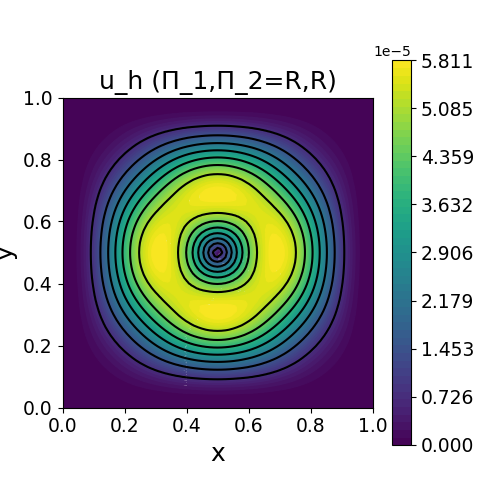}
\hspace{-2ex}
\includegraphics[width=0.22\textwidth, trim=5mm 0mm 0mm 0mm, clip]{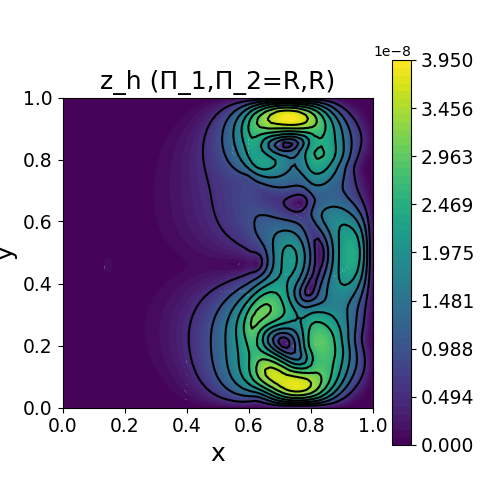}

\begin{tabular}{lll}
\fbox{$\alpha = 10^{-3}$, classical} &
\hspace{24ex}					   &
\fbox{$\alpha = 10^{-3}$, fully p-robust} 
\end{tabular}
\includegraphics[width=0.27\textwidth]{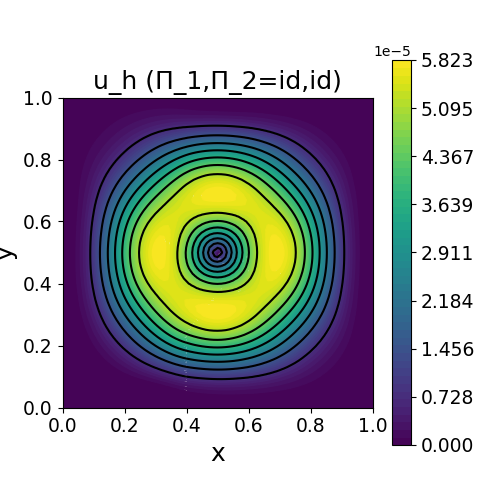}
\hspace{-2ex}
\includegraphics[width=0.22\textwidth, trim=5mm 0mm 0mm 0mm, clip]{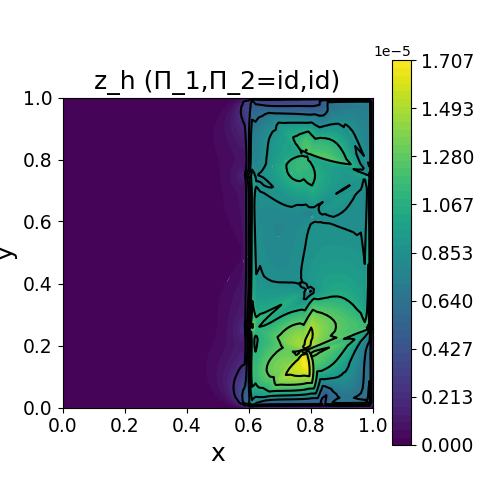}
\hfill
\includegraphics[width=0.27\textwidth]{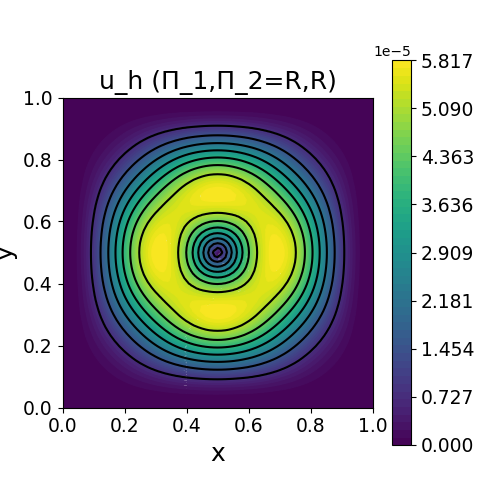}
\hspace{-2ex}
\includegraphics[width=0.22\textwidth, trim=5mm 0mm 0mm 0mm, clip]{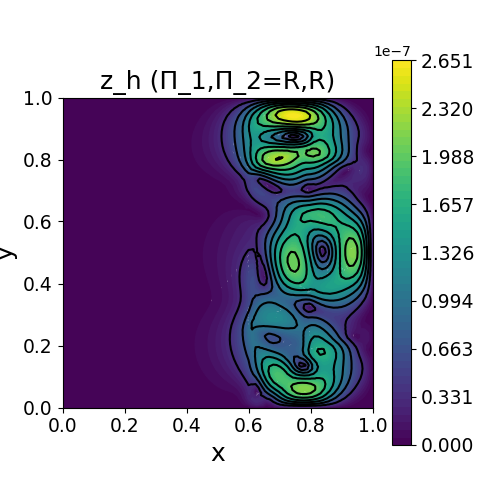}

\begin{tabular}{lll}
\fbox{$\alpha = 10^{-6}$, classical} &
\hspace{24ex}					   &
\fbox{$\alpha = 10^{-6}$, fully p-robust} 
\end{tabular}
\includegraphics[width=0.27\textwidth]{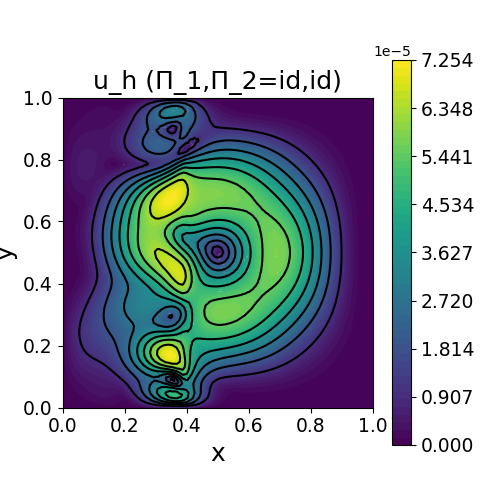}
\hspace{-2ex}
\includegraphics[width=0.22\textwidth, trim=5mm 0mm 0mm 0mm, clip]{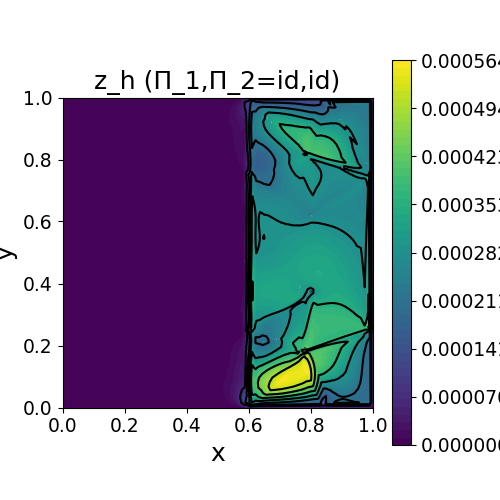}
\hfill
\includegraphics[width=0.27\textwidth]{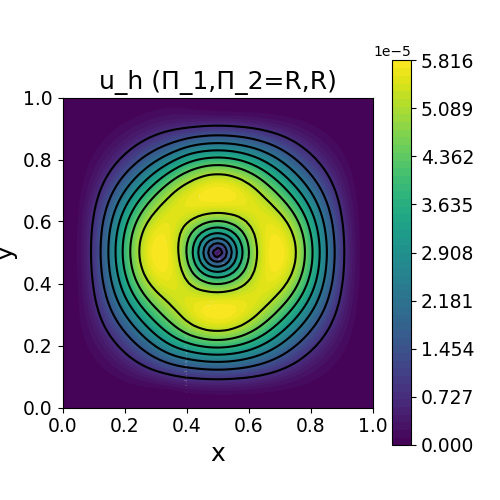}
\hspace{-2ex}
\includegraphics[width=0.22\textwidth, trim=5mm 0mm 0mm 0mm, clip]{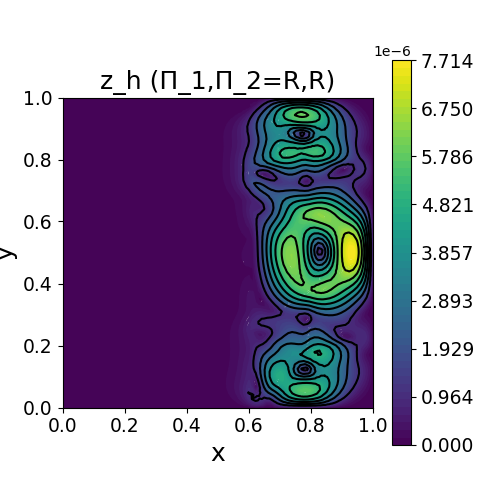}

\caption{\label{fig:ex2_images_br}Example : Discrete solutions $\vecb{u}_h$ (larger images) and $\vecb{z}_h$ (smaller images) for classical (left) and fully pressure-robust (right) Bernardi--Raugel method for \(\epsilon = 10^{-4}\), \(\nu = 10^{-3}\) and \(\alpha = 10^{-1}, 10^{-3}, 10^{-6}\) (from top to bottom).}
\end{figure}

\begin{figure}
\flushleft

\vspace{2ex}
\hspace{35ex} \fbox{classical scheme}

\vspace{-3ex}
\includegraphics[width=0.36\textwidth]{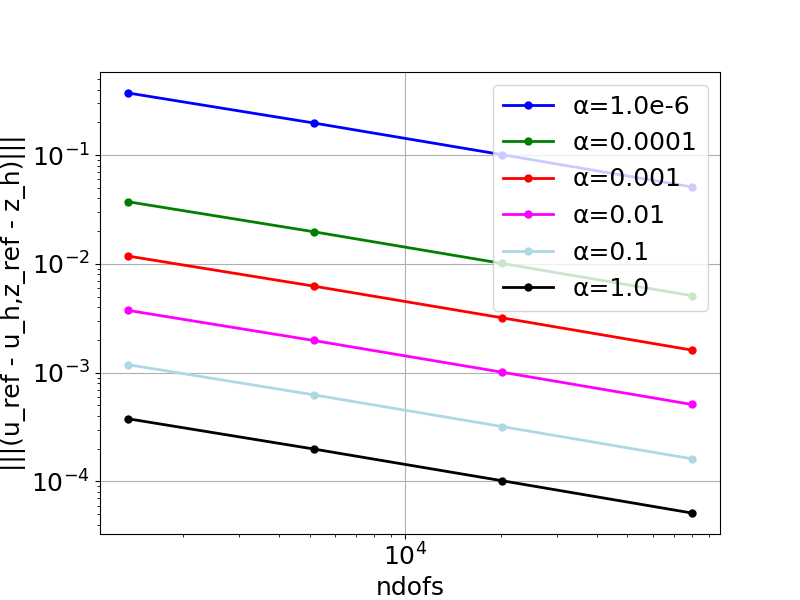}
\includegraphics[width=0.31\textwidth]{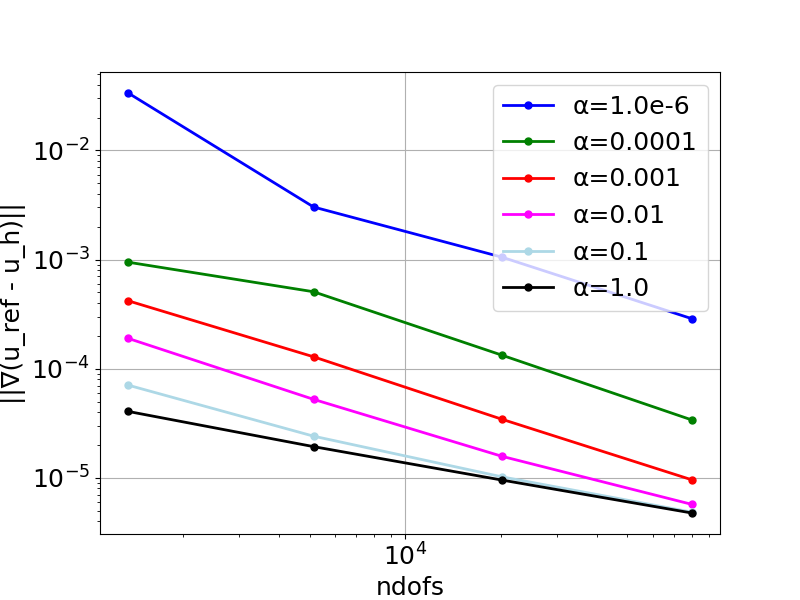}
\includegraphics[width=0.31\textwidth]{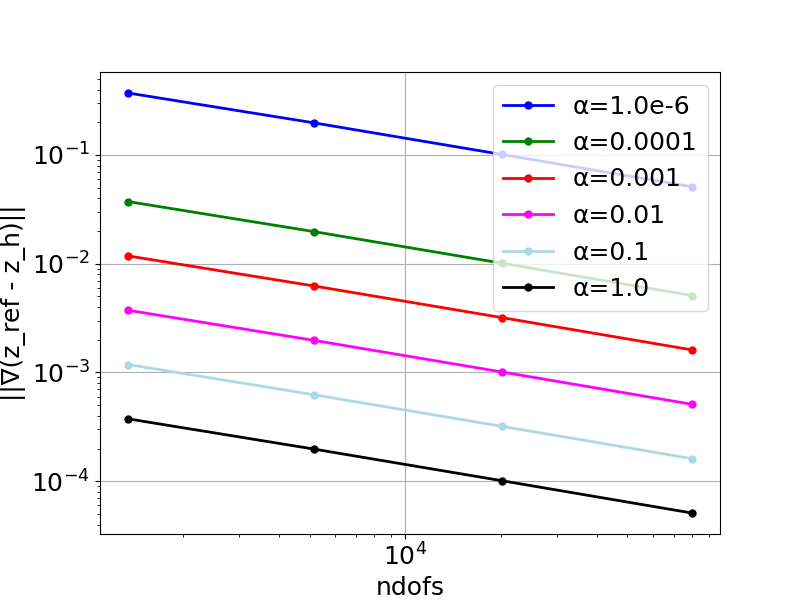}

\vspace{2ex}
\hspace{35ex} \fbox{partially p-robust scheme}

\vspace{-3ex}
\includegraphics[width=0.36\textwidth]{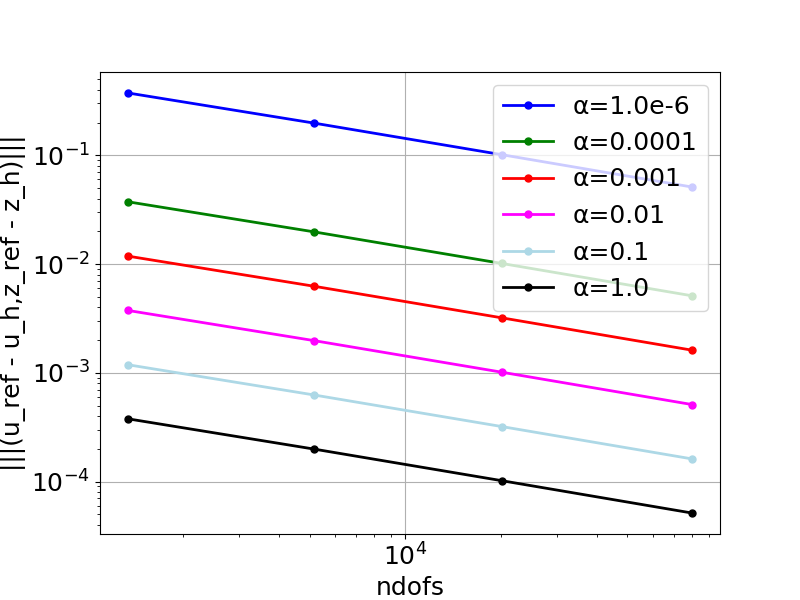}
\includegraphics[width=0.31\textwidth]{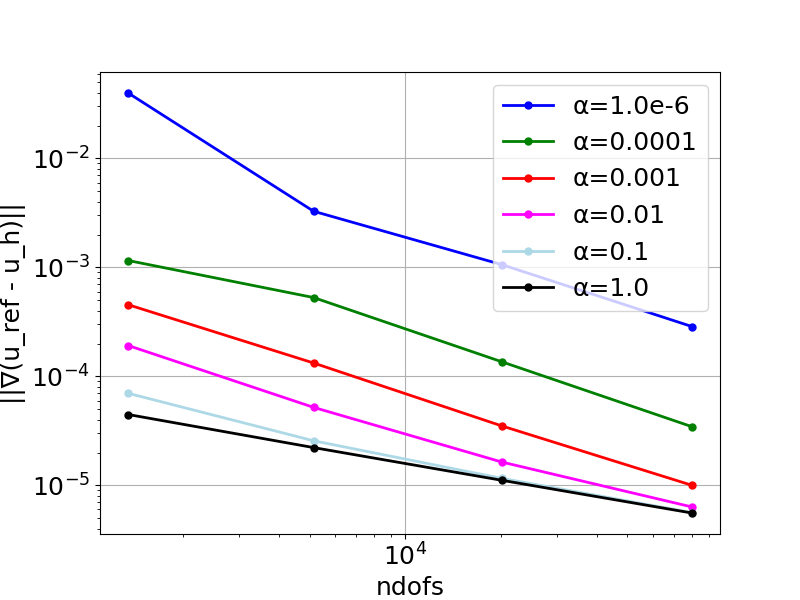}
\includegraphics[width=0.31\textwidth]{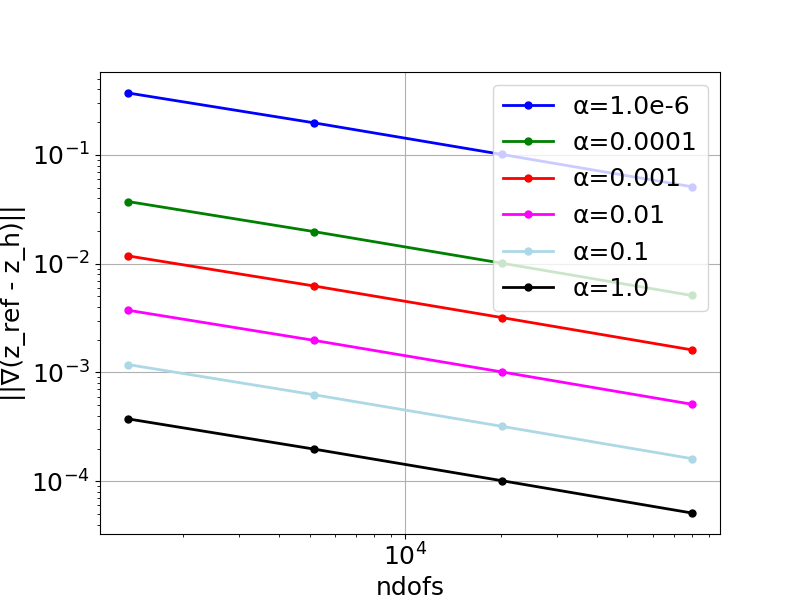}

\vspace{2ex}
\hspace{35ex} \fbox{fully p-robust scheme}

\vspace{-3ex}
\includegraphics[width=0.36\textwidth]{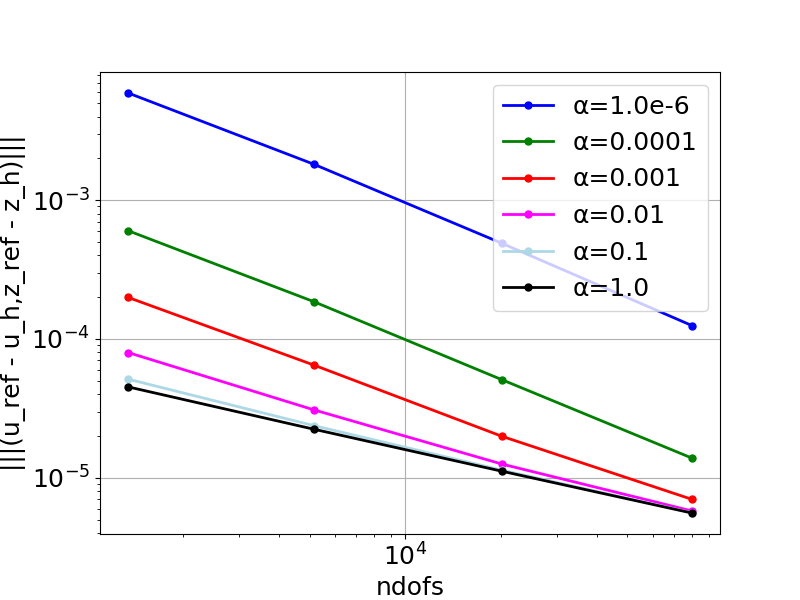}
\includegraphics[width=0.31\textwidth]{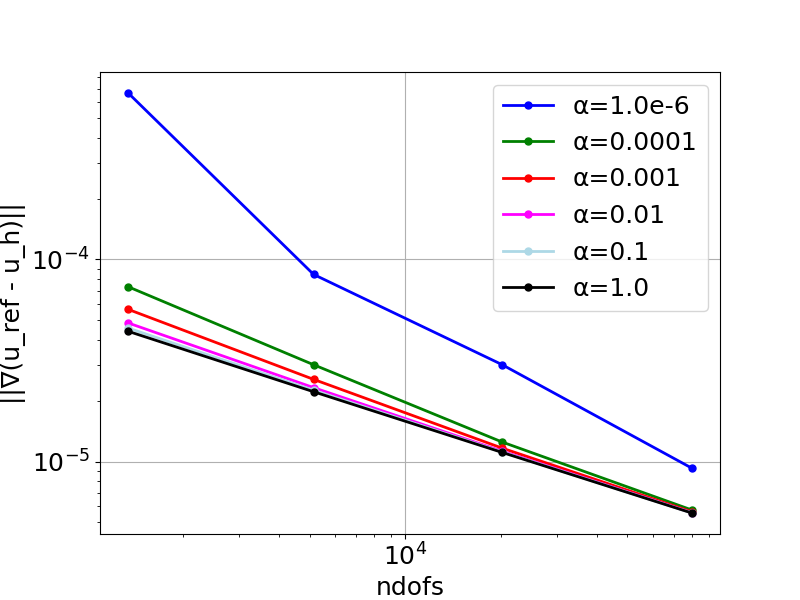}
\includegraphics[width=0.31\textwidth]{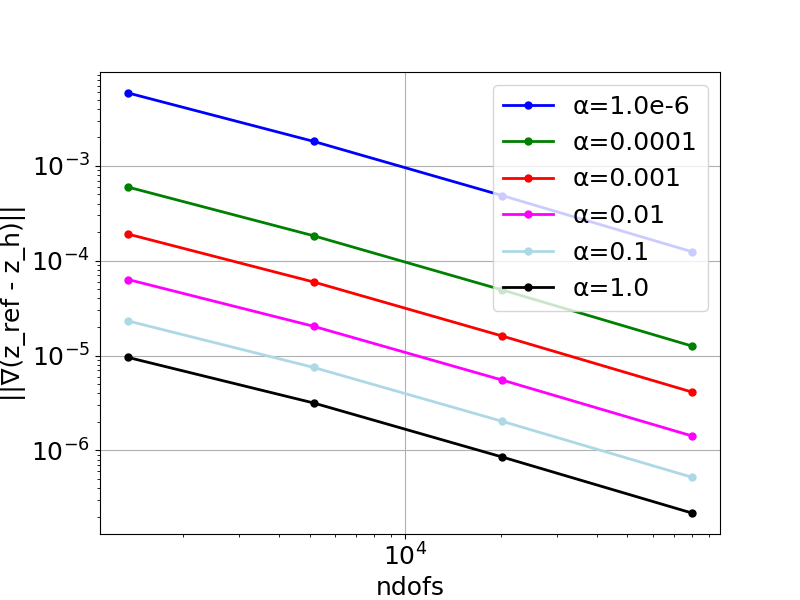}
\caption{\label{fig:ex2_convhist_mu3}Example 2: Convergence histories for the classical (top row),  partially pressure-robust (middle row) and fully pressure-robust (bottom row) Bernardi--Raugel methods for \(\epsilon = 10^{-4}\), \(\nu = 10^{-3}\) and various choices of \(\alpha\). The first, second and third column depict the total energy error, velocity error, and the control error, respectively.}
\end{figure}

By straightforward arguments, the optimal control is reformulated into
\begin{align*}
  \min_{(\vecb{q}_h,\vecb{u}_h,p_h) \in \vecb{Q} \times \vecb{V}_h\times Q_h}
    &\;\frac{1}{2}\|\Pi_1 \vecb{u}_h-\vecb{u}^{\rm{d}}\|_{\vecb{L}^2(\Omega_O)}^2
    + \frac{\alpha}{2}\|\vecb{q}_h\|_{\vecb{L}^2(\Omega_C)}^2\\
    \text{s.t.}&\; \left\{
      \begin{aligned}
        \nu (\nabla \vecb{u}_h,\nabla \vecb{\varphi}_h) + (p_h,\Div
        \vecb{\varphi}_h) &=   (\vecb{f},\Pi_2\vecb{\varphi}_h) + (\vecb{q}_h,\Pi_2\vecb{\varphi}_h)_{\Omega_C} &
        \forall&\vecb{\varphi}_h\in \vecb{V}_h,\\
        (\Div\vecb{u}_h, \psi_h) &= 0 & \forall&\psi_h \in Q_h.
      \end{aligned}\right.
  \end{align*}
The optimization problem is equivalent to searching for a
solution \((\vecb{u}_h, \vecb{z}_h, p_h , \lambda_h) \in \vecb{V}_h \times \vecb{V}_h \times Q_h \times Q_h\) of
  \begin{equation*}
\begin{aligned}
  \nu (\nabla \vecb{u}_h, \nabla \vecb{\varphi}_h) - (\Div \vecb{\varphi}_h,p_h)& = (\vecb{f}, \Pi_2 \vecb{\varphi}_h) - (\alpha^{-1/2}\Pi_2 \vecb{z}_h, \Pi_2 \vecb{\varphi}_h)_{\Omega_C}& \forall &\vecb{\varphi}_h\in \vecb{V}_h,\\
  (\Div \vecb{u}_h , q_h) & = 0& \forall&q_h \in Q_h,\\
  \nu (\nabla \vecb{\varphi}_h, \nabla \vecb{z}_h) + (\Div
  \vecb{\varphi}_h,\lambda_h) & = \alpha^{-1/2} (\Pi_1 \vecb{u}_h - \vecb{u}^{\rm{d}}, \Pi_1 \vecb{\varphi}_h)_{\Omega_O}& \forall &\vecb{\varphi}_h\in \vecb{V}_h,\\
  (\Div \vecb{z}_h , q_h) & = 0& \forall&q_h \in Q_h.
\end{aligned}
\end{equation*}

This example employs the same data from Example~1, but with a perturbation that really is orthogonal on divergence-free functions when integrated over \(\Omega_O\), i.e.
\begin{align*}
   \vecb{u}^d(x,y) := \vecb{u}(x,y) + \epsilon\nabla(\sin(x-0.6)\cos(y)).
 \end{align*}
Moreover, we prescribe \(f := -\mu\Delta \vecb{u}\) such that \((\vecb{0}, \vecb{u})\) is the minimizer of the objective functional for \(\alpha = 0\), so this time the control should be close to \(\vecb{z} = \vecb{0}\). For \(\alpha > 0\), the exact minimizer is once again approximated on a very fine grid with the second-order divergence-free Scott-Vogelius finite element method.

As in the previous example Figure~\ref{fig:ex2_images_br} depicts some discrete solutions of the classical scheme and the fully robust scheme for \(\nu = 10^{-3}\) and different choices of \(\alpha\). The solution \(\vecb{z}_h\) of the fully robust method is about two orders of magnitudes closer to \(\vecb{z} = 0\) than the classical scheme. This is also supported by the convergence histories in Figure~\ref{fig:ex2_convhist_mu3}. For very small \(\alpha\) the errors for the fully robust scheme are also about two order of magnitudes better than the errors of the classical scheme and also seem to converge faster. This may be explained by the velocity error \(\vecb{u}_h\) that is almost independent of \(\alpha\) and only gets larger for very small \(\alpha\). This might be caused by the higher-order term in Lemma~\ref{lem:fullyrobust}.

\section*{Acknowledgments}
We thank Alexander Linke for many pleasant and fruitful
discussions. W.~Wollner acknowledges funding by the Deutsche
Forschungsgemeinschaft (DFG, German Research Foundation) -- Projektnummer 392587580 -- SPP 1748

\section*{Acknowledgments}
We thank Alexander Linke for many pleasant and fruitful
discussions. W.~Wollner acknowledges funding by the Deutsche
Forschungsgemeinschaft (DFG, German Research Foundation) -- Projektnummer 392587580 -- SPP 1748


\begin{thebibliography}{10}

\bibitem{ABBGLM:2021}
{\sc N.~Ahmed, G.~R. Barrenechea, E.~Burman, J.~Guzmán, A.~Linke, and
  C.~Merdon}, {\em A pressure-robust discretization of oseen's equation using
  stabilization in the vorticity equation}, SIAM Journal on Numerical Analysis,
  59 (2021), pp.~2746--2774.

\bibitem{MR2206446}
{\sc P.~Bochev and M.~D. Gunzburger}, {\em Least-squares finite element methods
  for optimality systems arising in optimization and control problems}, SIAM J.
  Numer. Anal., 43 (2006), pp.~2517--2543.

\bibitem{BrezziFortin:1991}
{\sc F.~Brezzi and M.~Fortin}, {\em Mixed and Hybrid Finite Element Methods},
  vol.~15 of Springer Series in Computational Mathematics, Springer-Verlag,
  1991.

\bibitem{MR2549788}
{\sc Y.~Choi, S.~D. Kim, and H.-C. Lee}, {\em Analysis and computations of
  least-squares method for optimal control problems for the {S}tokes
  equations}, J. Korean Math. Soc., 46 (2009), pp.~1007--1025.

\bibitem{ChrysafinosKartzas:2015}
{\sc K.~Chrysafinos and E.~N. Karatzas}, {\em Symmetric error estimates for
  discontinuous {G}alerkin time-stepping schemes for optimal control problems
  constrained to evolutionary {S}tokes equations}, Comp. Optim. Appl., 60
  (2015), pp.~719--751.

\bibitem{MR2050079}
{\sc K.~Deckelnick and M.~Hinze}, {\em Semidiscretization and error estimates
  for distributed control of the instationary {N}avier-{S}tokes equations},
  Numer. Math., 97 (2004), pp.~297--320.

\bibitem{FN:2013}
{\sc R.~S. Falk and M.~Neilan}, {\em Stokes complexes and the construction of
  stable finite elements with pointwise mass conservation}, SIAM J. Numer.
  Anal., 51 (2013), pp.~1308--1326.

\bibitem{GaspozKreuzerVeeserWollner:2020}
{\sc F.~Gaspoz, C.~Kreuzer, A.~Veeser, and W.~Wollner}, {\em Quasi-best
  approximation in optimization with {PDE} constraints}, Inverse Problems, 36
  (2020), p.~014004.

\bibitem{GLS:2019}
{\sc N.~R. Gauger, A.~Linke, and P.~W. Schroeder}, {\em On high-order
  pressure-robust space discretisations, their advantages for incompressible
  high {Reynolds} number generalised {Beltrami} flows and beyond}, The SMAI
  journal of computational mathematics, 5 (2019), pp.~89--129.

\bibitem{GiraultRaviart:1986}
{\sc V.~Girault and P.-A. Raviart}, {\em Finite Element Methods for
  {N}avier-{S}tokes Equations}, vol.~5 of Springer Series in Computational
  Mathematics, Springer, Berlin, 1986.
\newblock Theory and Algorithms.

\bibitem{MR4169689}
{\sc W.~Gong, W.~Hu, M.~Mateos, J.~R. Singler, and Y.~Zhang}, {\em Analysis of
  a hybridizable discontinuous {G}alerkin scheme for the tangential control of
  the {S}tokes system}, ESAIM Math. Model. Numer. Anal., 54 (2020),
  pp.~2229--2264.

\bibitem{MR1079020}
{\sc M.~D. Gunzburger, L.~Hou, and T.~P. Svobodny}, {\em Analysis and finite
  element approximation of optimal control problems for the stationary
  {N}avier-{S}tokes equations with distributed and {N}eumann controls}, Math.
  Comp., 57 (1991), pp.~123--151.

\bibitem{MR1135991}
{\sc M.~D. Gunzburger, L.~S. Hou, and T.~P. Svobodny}, {\em Analysis and finite
  element approximation of optimal control problems for the stationary
  {N}avier-{S}tokes equations with {D}irichlet controls}, RAIRO Mod\'{e}l.
  Math. Anal. Num\'{e}r., 25 (1991), pp.~711--748.

\bibitem{GN:2014}
{\sc J.~Guzm{\'a}n and M.~Neilan}, {\em Conforming and divergence-free {S}tokes
  elements on general triangular meshes}, Math. Comp., 83 (2014), pp.~15--36.

\bibitem{Hinze:2005}
{\sc M.~Hinze}, {\em A variational discretization concept in control
  constrained optimization: The linear-quadratic case}, Comput. Optim. Appl.,
  30 (2005), pp.~45--61.

\bibitem{JLMNR:2017}
{\sc V.~John, A.~Linke, C.~Merdon, M.~Neilan, and L.~G. Rebholz}, {\em On the
  divergence constraint in mixed finite element methods for incompressible
  flows}, SIAM Review, 59 (2017), pp.~492--544.

\bibitem{KVZ:2021}
{\sc C.~Kreuzer, R.~Verf\"urth, and P.~Zanotti}, {\em Quasi-optimal and
  pressure robust discretizations of the stokes equations by moment- and
  divergence-preserving operators}, Computational Methods in Applied
  Mathematics, 21 (2021), pp.~423--443.

\bibitem{LLMS:2017}
{\sc P.~Lederer, A.~Linke, C.~Merdon, and J.~Sch\"oberl}, {\em Divergence-free
  reconstruction operators for pressure-robust stokes discretizations with
  continuous pressure finite elements}, SIAM Journal on Numerical Analysis, 55
  (2017), pp.~1291--1314.

\bibitem{Linke:2012}
{\sc A.~Linke}, {\em A divergence-free velocity reconstruction for
  incompressible flows}, C. R. Math. Acad. Sci. Paris, 350 (2012),
  pp.~837--840.

\bibitem{LMT:2016}
{\sc A.~Linke, G.~Matthies, and L.~Tobiska}, {\em Robust arbitrary order mixed
  finite element methods for the incompressible stokes equations with pressure
  independent velocity errors}, ESAIM: M2AN, 50 (2016), pp.~289--309.

\bibitem{LM:2016}
{\sc A.~Linke and C.~Merdon}, {\em Pressure-robustness and discrete {H}elmholtz
  projectors in mixed finite element methods for the incompressible
  {N}avier-{S}tokes equations}, Comput. Methods Appl. Mech. Engrg., 311 (2016),
  pp.~304--326.

\bibitem{LMN:2020}
{\sc A.~Linke, C.~Merdon, and M.~Neilan}, {\em Pressure-robustness in
  quasi-optimal a priori estimates for the {S}tokes problem}, Electron. Trans.
  Numer. Anal., 52 (2020), pp.~281--294.

\bibitem{LMW:2015}
{\sc A.~Linke, C.~Merdon, and W.~Wollner}, {\em Optimal ${L}^2$ velocity error
  estimate for a modified pressure-robust {C}rouzeix--{R}aviart {S}tokes
  element}, IMA J. Numer. Anal., 37 (2017), pp.~354--374.

\bibitem{Neilan:2015}
{\sc M.~Neilan}, {\em Discrete and conforming smooth de {R}ham complexes in
  three dimensions}, Math. Comp., 84 (2015), pp.~2059--2081.

\bibitem{RoeschVexler:2006}
{\sc A.~R{\"o}sch and B.~Vexler}, {\em Optimal control of the {S}tokes
  equations: {A} priori error analysis for finite element discretization with
  postprocessing}, SIAM J. Numer. Anal., 44 (2006), pp.~1903--1920.

\bibitem{SV:1983}
{\sc L.~R. Scott and M.~Vogelius}, {\em Conforming finite element methods for
  incompressible and nearly incompressible continua}, in Large-scale
  computations in fluid mechanics, {P}art 2 ({L}a {J}olla, {C}alif., 1983),
  vol.~22 of Lectures in Appl. Math., Amer. Math. Soc., Providence, RI, 1985,
  pp.~221--244.

\bibitem{Troeltzsch:2010}
{\sc F.~Tr\"oltzsch}, {\em Optimal control of partial differential equations},
  vol.~112 of Graduate Studies in Mathematics, American Mathematical Society,
  Providence, RI, 2010.

\bibitem{Zhang:2011}
{\sc S.~Zhang}, {\em Quadratic divergence-free finite elements on
  {P}owell-{S}abin tetrahedral grids}, Calcolo, 48 (2011), pp.~211--244.

\end{thebibliography}

\end{document}